\newtheorem{lem}{Lemma}[section]
\newtheorem{thm}[lem]{Theorem}
\newtheorem{pro}[lem]{Proposition}
\newtheorem{cor}[lem]{Corollary}
\newtheorem{rem}[lem]{Remark}
\newcommand{\slrw}[1]{\stackrel{#1}{\longrightarrow}}
\newcommand{\lrw}{\longrightarrow}
\newcommand{\LL}{\Lambda}
\newcommand{\xa}{\alpha}
\newcommand{\xb}{\beta}
\newcommand{\xc}{\gamma}
\newcommand{\caDb}{\mathcal D^b}
\newcommand{\caM}{\mathcal M}
\newcommand{\caN}{\mathcal N}
\newcommand{\RHom}{\mathbf{R}\strut\kern-.2em\operatorname{Hom}\nolimits}
\newcommand{\zZ}{{\mathbb Z}}
\newcommand{\tV}{\widetilde{V}}
\newcommand{\tL}{\widetilde{\LL}}
\newcommand{\dt}{\Delta}
\newcommand{\dtL}{\Delta{\LL}}
\newcommand{\dtsL}{\Delta_{\sigma}{\LL}}
\newcommand{\dtnL}{\Delta_{\nu}{\LL}}
\newcommand{\olL}{\overline{\LL}}
\newcommand{\udgrm}{\underline{\mathrm{grmod}}}
\newcommand{\GG}{\Gamma}
\newcommand{\trho}{\tilde{\rho}}
\newcommand{\trhos}{\tilde{\rho}_{\sigma}}
\newcommand{\otsuGz}[1]{^{\otimes #1}}
\newcommand{\otsGz}{\otimes}
\newcommand{\tQ}{\widetilde{Q}}
\newcommand{\tQs}{\widetilde{Q}^{\sigma}}
\newcommand{\olS}{\overline{S}}
\newcommand{\olQ}{\overline{Q}}
\newcommand{\olrho}{\overline{\rho}}
\newcommand{\hhom}{\mathrm{Hom}}
\newcommand{\hhm}{\mathrm{Hom}}
\newcommand{\Ext}{\mathrm{Ext}}
\newcommand{\uhom}[1]{\underline{\mathrm{Hom}}}
\newcommand{\uExt}[1]{\underline{\mathrm{Ext}}}
\newcommand{\ohom}[1]{\overline{\mathrm{Hom}}}
\newcommand{\Ker}{\mathrm{Ker}\,}
\newcommand{\Img}{\mathrm{Im}\,}
\newcommand{\dmn}{\mathrm{dim}\,}
\newcommand{\arr}[2]{\begin{array}{#1}#2\end{array}}
\newcommand{\eqqc}[2]{\begin{equation}\label{#1}#2\end{equation}}
\newcommand{\eqqcn}[2]{\[ #2 \]}
\newcommand{\eqqcnn}[2]{$ #2 $}
\begin{document}

\title{On Trivial Extensions and Higher Preprojective Algebras}
\address{Jin Yun Guo\footnote{This work is supported by Natural Science Foundation of China \#11671126, and the Construct Program of the Key Discipline in Hunan Province}, LCSM( Ministry of Education), School of Mathematics and Statistics\\ Hunan Normal University\\ Changsha, Hunan 410081, P. R. China}
\email{gjy@hunnu.edu.cn}

\subjclass{Primary {16G20}; Secondary{16G70, 16S37}}

\keywords{ twisted trivial extension;  preprojective algebra; Koszul algebra; $n$-homogeneous algebra; $\tau$-slice algebra
}
%\date{}

\begin{abstract}
In this paper, we show that for a Koszul $n$-homogeneous algebra $\LL$,  the quadratic dual of certain twisted trivial extension is the $(n+1)$-preprojective algebra of its quadratic dual, that is, $ (\dtnL)^{!,op} \simeq\Pi( \LL^{ !, op })$.
This is applied to the $\tau$-slice algebras of stable $n$-translation algebras and gives a noncommutative version of Bernstein-Gelfand-Gelfand correspondence for such algebras.
\end{abstract}

\maketitle

\section{Introduction}
Preprojective algebra play an important role in representation theory of algebras \cite{gp79,bgl87, dr80, cb99} and many other fields of mathematics such as quantum groups, quiver varieties,  cluster algebras \cite{lu91, n94, gls06}.
Iyama and Oppermann introduced the $(n+1)$-preprojective algebra in studying higher representation\cite{io13}, which is  the $0$-th homology of Keller＊s derived (n+1)-preprojective DG algebra\cite{k11}.
Preprojective algebras and higher preprojective algebras are also important in non-commutative algebraic geometry\cite{cb99,mm11}.

Martinez-Villa has found that the quadratic dual of a preprojective algebra of a hereditary algebra is a self-injective algebra with vanishing radical cube \cite{m2}.
Relationship between higher preprojective algebras and trivial extensions is studied in \cite{mm11} for quasi-Fano algebras and is studied in \cite{gw18} for a class of $\tau$-slice algebras.

In this paper, we introduce $n$-homogeneous algebra (see Section 3).
By studying the quivers with relations as in \cite{th16}, we prove that for a Koszul $n$-homogeneous algebra $\LL$, the $(n+1)$-preprojective algebra $\Pi(\LL^{!,op})$ of its quadratic dual $\LL^{!,op}$ is isomorphic to the quadratic dual of a twisted trivial extension $\dtnL$ of $\LL$, that is, $\Pi(\LL^{!,op}) \simeq (\dtnL)^{!,op}$.

We recently find $\tau$-slice algebras of stable $n$-translation algebras are interesting in studying higher representation theory \cite{g16}.
We show that such algebras are Koszul $n$-homogeneous, so our mail theorem applied to this case as a generalization of the result obtained in \cite{gw18}.
When the $n$-translation algebra is the Koszul, we know that the quadratic dual of $\LL$ is a quasi $(n-1)$-Fano algebra (see \cite{mm11,gw18}) and the $(n+1)$-preprojective algebra is AS-regular algebra.
So we get  a  noncommutative version of Bernstein-Gelfand-Gelfand correspondence when the $(n+1)$-preprojective algebra is Noetherian (Theorem \ref{noncommBGG}).

The paper is organized as follows.
In Section 2, we recall basic notions in bound quivers, quadratic algebras and quadratic dual  needed in this paper.
In Section 3, we study the returning arrow quiver of $n$-homogeneous quiver and describe the trivial extension of an $n$-homogeneous algebra using returning arrow quiver and relations.
The relations of $(n+1)$-preprojective algebra of the quadratic dual of an $n$-homogeneous algebra are studied in Section 4, and the main theorem is proven in Section 5.
In the last section, we apply the main theorem to $\tau$-slice algebras of a stable $n$-translation algebra and obtain a noncommutative version of Bernstein-Gelfand-Gelfand correspondence.

\section{Preliminary}
Let $k$ be a field, and let $\LL = \LL_0 + \LL_1+\cdots$ be a graded algebra over $k$ with $\LL_0$ direct sum of copies of $k$ such that $\LL$ is generated by $\LL_0$ and $\LL_1$.
Such algebra is determined by a bound quiver $Q= (Q_0,Q_1, \rho)$ \cite{g16}.

Recall that a bound quiver $Q= (Q_0, Q_1, \rho)$ is a quiver with $Q_0$ the set of vertices, $Q_1$ the set of arrows and $\rho$ a set of relations.
The arrow set $Q_1$ is as usual defined with two maps $s, t$ from $Q_1$ to $Q_0$ to assign an arrow $\alpha$ its starting vertex $s(\alpha)$ and its ending vertex $t(\alpha)$.
The arrow set $Q_1$ in this paper is assumed to be locally finite in the sense that for each pair $i,j \in Q_0$, the set of arrows with $s(\alpha)=i$ and $t(\alpha)=j$ is finite.
We also write $s(p)=i$, $t(p)=j$ for a path in $Q$ from $i$ to $j$.
The relation set $\rho$ is a set of linear combinations of paths of length $\ge 2$, since we study graded algebra, it can be normalized such that each element in $\rho$ is a linear combination of paths of the same length starting at the same vertex and ending at the same vertex.

Let $S= \LL_0 = \bigoplus\limits_{i\in Q_0} k_i$, with $k_i \simeq k$ as algebras, and let $e_i$ be the image of the identity of $k$ under the canonical embedding of the $k_i$ into $S$.
Then $\{e_i| i \in Q_0\}$ is a complete set of orthogonal primitive idempotents in $S$ and $V= \LL_1 = \bigoplus\limits_{i,j \in Q_0 }e_j \LL_1 e_i$ as $S $-$ S $-bimodules.
Fix a basis $Q_1^{ij}$ of $e_j \LL_1 e_i$ for any pair $i, j\in Q_0$, take the elements of $Q_1^{ij}$ as arrows from $i$ to $j$, and let $Q_1= \cup_{(i,j)\in Q_0\times Q_0} Q_1^{ ij}.$
Conventionally, we write  $\otimes $ for $\otimes_{S}$.
Thus $V$ is the $S$-bimodule with $Q_1$ as a basis.
Let $Q_t$ be the set of the paths of length $t$ in $Q$ and let $kQ_t$ be the $k$-space spanned by $Q_t$.
When identifying $\xa_t\otimes \cdots \otimes \xa_1$ with path $p =\xa_t\cdots \xa_1$ in $Q$, then $V^{\otimes_{} t} = kQ_t$, and one gets an isomorphism of the tensor algebra $ T_{S}V = S \oplus V \oplus (V\otimes_{} V) + \cdots = \bigoplus_{t\ge 0} V^{\otimes_{} t }$ of $V$ over $S$ and the path algebra $kQ$: $T_{S}V\simeq kQ $ as algebras over $k$.
We will use tensor algebra $T_S V$ and path algebra $kQ$ alternatively in this paper.

There is canonical epimorphism from $kQ$ to $\LL$ with kernel $I$ contained in $kQ_2 + kQ_3 + \cdots$.
Choose a generating set $\rho$  of $I$, whose elements are linear combinations of paths of length $\ge 2$.
Then we have $I=(\rho)$ and $\LL \simeq kQ/(\rho)$, a quotient algebra of the path algebra $k Q$.
$\LL$ is also called the bound quiver algebra of bound quiver $Q= (Q_0,Q_1, \rho)$.
A path $p$ in $Q$ is called bound path if its image in $kQ/(\rho)$ is non-zero.
We usually use same notation for elements in the path algebra and its image in the bound quiver algebra when no confusion appears.

Let $Q= (Q_0, Q_1, \rho)$ be a bound quiver with quadratics relations.
In this case, the quotient algebra $\LL = kQ/(\rho)$ is called a {\em quadratic algebra.}
Write $DW=\hhom_k(W,k)$ for the dual space of a finitely dimensional $k$-space $W$.
By take dual bases locally, we have    $DS = k Q_0^{op} = \sum\limits_{i\in Q_0}k e_i^*$, $DV = k Q_1^{op} = \sum\limits_{\xa\in Q_1 } k \xa^*$, and identifying $D(V^{\otimes t}) $ with $(DV)^{\otimes t}$ via $\xc_t^*\otimes \cdots \otimes \xc_1^*(\xa_1\otimes \cdots\otimes \xa_t) = \xc_t^*(\cdots(\xc_1^*(\xa_1)\cdots)\xa_t)$.
So $(DV)^{\otimes t}$ is a dual space of  $V^{\otimes t}$ with $\{p^* =\xa_1^* \otimes\cdots\otimes \xa_t^* | p=\xa_t\otimes\cdots\otimes \xa_1\in Q_t\} $ a dual basis of $Q_t$.
We will need the following easy lemma for the transitional matrix of dual basis.
\begin{lem}\label{dualtransitionalmatrix}
Let $v_1,\cdots,v_m $ and  $w_1,\cdots,w_m$  be bases in a vector space $W$ with the transitional matrix $P$, and let $v^*_1,\cdots,v^*_n $ and  $w^*_1, \cdots, w^*_n$ be respectively  their dual bases in $DW$with the transitional matrix $T$.
Then $T^{-1}= P^{t}$.
\end{lem}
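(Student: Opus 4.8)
The plan is to derive the identity directly from the defining property of a dual basis, namely $v_i^*(v_j)=\delta_{ij}$ and $w_i^*(w_j)=\delta_{ij}$, so that nothing beyond the bilinearity of the evaluation pairing is required. First I would fix conventions for the two transitional matrices. Write the change of basis as $w_j=\sum_{i}P_{ij}v_i$, so that $P=(P_{ij})$ expresses the $w$'s in terms of the $v$'s, and correspondingly write $w_k^*=\sum_{l}T_{lk}v_l^*$ for the dual transitional matrix $T=(T_{lk})$; note $m=n$ since $\dim W=\dim DW$. The statement is sensitive to these choices only up to a transpose, so keeping the index placement consistent throughout is the one thing that must be watched.

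The key step is to evaluate $w_k^*(w_j)$ in two ways. On one hand it equals $\delta_{kj}$, by definition of the dual basis $\{w_k^*\}$. On the other hand, substituting both expansions and using $v_l^*(v_i)=\delta_{li}$ gives
\[
w_k^*(w_j)=\sum_{l,i}T_{lk}P_{ij}\,v_l^*(v_i)=\sum_{i}T_{ik}P_{ij}=(T^{t}P)_{kj}.
\]
Comparing the two evaluations yields $T^{t}P=I_m$, the identity matrix.

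Finally, from $T^{t}P=I_m$ I would read off $P=(T^{t})^{-1}=(T^{-1})^{t}$, since inversion and transposition commute, and transposing once more gives $P^{t}=T^{-1}$, which is the claim. I do not anticipate a genuine obstacle: the only delicate point is the bookkeeping of transposes, and this is entirely governed by the two convention choices fixed at the outset. Had I instead recorded $P$ and $T$ with the roles of old and new bases interchanged, the analogous evaluation of $w_k^*(w_j)$ would produce the same conclusion $T^{-1}=P^{t}$, so the result is robust to the convention.
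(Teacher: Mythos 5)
Your argument is correct: evaluating $w_k^*(w_j)=\delta_{kj}$ against the expansions $w_j=\sum_i P_{ij}v_i$ and $w_k^*=\sum_l T_{lk}v_l^*$ gives $T^{t}P=I$, which is exactly $T^{-1}=P^{t}$, and your handling of the convention-dependence is sound. The paper states this lemma without proof, and your computation is the standard one that was intended, so there is nothing to compare beyond noting that you have supplied the omitted verification.
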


The quadratic dual quiver $Q^{op,\perp} = (Q^{op}_0, Q^{op}_1, \rho^{op, \perp})$ of $Q$ has the same vertex set $Q^{op}_0=Q_0$, has the local dual basis  $Q^{op}_1 =\{ \xa^*:j \to i|\xa:i\to j \in Q_1\} $ of $Q_1$ as arrow set, and has a basis $\rho^{op,\perp}$ in the orthogonal subspace $(k\rho)^{\perp} \in k Q^{op}_2$ of $k \rho$ in the dual space $kQ^{op}_2$ as relations.
Identifying the opposite quiver of $Q^{op,\perp}$ with $Q$, denoted by $Q^{\perp} = (Q_0, Q_1, \rho^{\perp})$  the opposite quiver of the quadratic dual quiver.
Set $\delta_{x,y} = \left\{\arr{ll}{1 & x=y\\0& x\neq y}\right.$, we have now  \eqqc{dualdef}{\xc(\xa)= \delta_{\xa,\xc} \mbox{ and }(\xc_1\otimes \cdots \otimes \xc_t )(\xa_1 \otimes \cdots \otimes \xa_t)= \xc_1(\xa_1)\cdots\xc_t(\xa_t).}
In this case, $(x,y)=x(y)$ for $x,y \in kQ_2$ defines a non-degenerated bilinear form on $kQ_2$.
The algebra $\LL^{!,op} = k Q^{\perp} /(\rho^{\perp})$ is called the {\em quadratic dual algebra} of $\LL$.

\medskip

$n$-translation quivers and $n$-translation algebras are introduced and studied in \cite{g16}.
With Nakayama permutation as the translation $\tau$, the bound quiver of a graded self-injective algebra of Lowey length $n+2$ is a stable $n$-translation quiver \cite{g12, g16}.
In fact, stable $n$-translation quivers are exactly the bound quivers of graded self-injective algebras of Lowey length $n+2$.
Recall that an algebra is called an {\em $n$-translation algebra}, if its bound quiver is an $n$-translation quiver and there is a $q \in \mathbb N \cup \{\infty \}$ such that it is $(n+1,q)$-Koszul \cite{g16,bbk02}.

Complete $\tau$-slices and $\tau$-slice algebras are introduced for stable $n$-translation quiver in \cite{g12}.
Let $\olQ=(\olQ_0, \olQ_1, \olrho)$ be a acyclic stable $n$-translation quiver with $n$-translation $\tau$, and assume that $\olQ$ has only finite many $\tau$-orbits.
A full subquiver $Q$ is called a  {\em complete $\tau$-slice} of $\olQ$ if it satisfies the following conditions:
(a). for each vertex $v $ of $\olQ$, the intersection of the $\tau$-orbit of $v$ and the vertex set of $Q$ is a single-point set;
(b). $Q$ is convex in the sense that for each path $p: v_0 \to v_1 \to \ldots \to v_t$ of  $\olQ$ with $v_0$ and $v_t$ in $Q$, the whole path $p$ lies in $Q$.
When normalizing the relations such that they are linear combinations of paths with the same starting vertex and the same ending vertex, then $$\rho = \{x = \sum\limits_p a_p p \in \olrho| s(p), t(p) \in Q_0 \} \subseteq \olrho,$$
and we regard a complete $\tau$-slice as the bound quiver $Q=(Q_0,Q_1,\rho)$.

The algebra $\LL$ defined by a complete $\tau$-slice $Q$ in  $\olQ$ is called a {\em $\tau$-slice algebra} of the bound quiver $\olQ$.
If $\olQ$ is the bound quiver of an algebra $\olL$, we also say that $\LL$ is a {\em $\tau$-slice algebra} of $\olL$.

If $\olQ$ define an $n$-translation algebra $\olL$, $\LL$ is a quadratic algebra, and its quadratic dual $\GG= {\LL}^{!,op}$ is called {\em dual $\tau$-slice algebra} of $\olQ$.

\medskip

\section{Trivial extension of $n$-homogeneous algebra}%quivers and $\tau$-slice algebras}
We study the quiver and relation of the twisted trivial extensions of an $n$-homogeneous algebra

A bound quiver $Q =(Q_0,Q_1,\rho)$  is called {\em $n$-homogeneous} if the relations can be chosen homogeneous and all the maximal bound paths have the same length $n$.
We also call the (graded) algebra defined by an  $n$-homogeneous quiver an {\em $n$-homogeneous algebra.}

Recall that the trivial extension $\LL\ltimes M$ of an algebra $\LL$ by a $\LL$-bimodule $M$ is the algebra defined on the vector spaces $\LL\oplus M$ with the multiplication defined by $(a,x)(b,y)=(ab, ay+xb)$ for $a,b\in \LL$ and $x,y\in M$.
$\dtL = \LL\ltimes D\LL$ is called the  trivial extension of $\LL$.

Let $\sigma$ be a graded automorphism of $\LL$, that is, an automorphism that preserves the degree of homogeneous element.
Let $M$ be a $\LL$-bimodule.
Define the twist $M^{\sigma}$ of $M$ as the bimodule with $M$ as the vector space.
The left multiplication is the same as $M$, and the right multiplication is twisted by $\sigma$, that is,  defined by $x\cdot b=x\sigma(b)$ for all $x \in M^{\sigma}$ and $b \in \LL$.
Define the twisted trivial extension $ \dtsL= \LL \ltimes D \LL^{\sigma}$ to be the trivial extension of $\LL$ by the twisted $\LL$-bimodule $D \LL^{\sigma}$.

Let $\LL$ be an $n$-homogeneous algebra with bound quiver $Q =(Q_0, Q_1, \rho)$, let $\caM$ be a maximal linearly independent set of maximal bound paths.
Assume that $\LL=\LL_0 + \LL_1 + \cdots +\LL_n$ is an $n$-homogeneous algebra and that $\caM$ is a basis of $\LL_n$.
Let $\caM_t\subseteq Q_t$ be a maximal linearly independent set of bound paths of length $t$, then it is a basis of $\LL_t$ and we may take $\caM_n =\caM$.
Let $\caM^*_t= \{p^*| p\in \caM_t\}$ be the dual basis of $\caM_t$ in the dual space $D\LL_t$, then $D\LL$ is generated by $D\LL_n$ as a $\LL$-bimodule.
As an algebra, $\dtsL$ is generated by $\LL_0$ and $\LL_1 + D \LL_n$.
Let $\tilde{\LL}_0=\LL_0$, $\tilde{\LL}_t =\LL_t + D\LL_{n+1-t} $ for $t=1,\cdots, n$ and $\tilde{\LL}_{n+1} = D\LL_0$.
We see $\dtsL=\tilde{\LL}_0+\tilde{\LL}_1+\cdots + \tilde{\LL}_n+ \tilde{\LL}_{n+1}.$

Define {\em the returning arrow quiver} $\tQ= (\tQ_0,\tQ_1,\trho)$ with $\tQ_0 = Q_0$, $\tQ_1 = Q_1\cup Q_{1,\caM}$, with $Q_{1,\caM} = \{\beta_{p}: t(p) \to s(p)|  p\in \caM\}$.
So $\tQ$ is obtained by adding a arrow in the reversed direction to each maximal bound path in $Q$.

Recall that $S = \bigoplus\limits_{i\in Q_0} k e_i$ is a semisimple algebra, $V= \bigoplus\limits_{\xa\in Q_1} k \xa$ is the  $S$-bimodule spanned by the arrows in $Q$.
Let $V_{\caM}  = k Q_{\caM,1}$ be the $S$-bimodule spanned by the arrows in $Q_{\caM}$.
Let $\tV = V + V_{\caM} $, then $k \tQ = T_S \tV$.
The natural epimorphism $\mu$ from $kQ $ to $\LL$ is extended to an epimorphism $\mu_{\sigma} : k\tQ \to \dtsL$ by assigning $\beta_p$ to $(0,p^*)$ for each $p\in \caM$.
Let $\trho_{\sigma}$ be a generating set of $\Ker \mu_{\sigma} $, then $\dtsL \simeq k\tQ/(\trho_{\sigma})$, and $\trho_{\sigma}$ is a relation set for $\dtsL$.

We have the following proposition (see also Proposition 2.2 of \cite{fp02}).

\begin{pro}\label{quiverTE}
If $Q=(Q_0,Q_1,\rho)$ is the bound quiver of a graded algebra $\LL$ such that maximal bound paths have the same length $n$, then there is a relation set $\trhos$ such that $\tQs = (\tQ_0, \tQ_1, \trhos)$ is the bound quiver of the twisted trivial extension  $\dtsL$ of $\LL$.
\end{pro}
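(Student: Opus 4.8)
The plan is to prove the statement by showing that the algebra homomorphism $\mu_\sigma\colon k\tQ\to\dtsL$ already constructed above is a \emph{surjective graded} homomorphism; once this is in place, taking $\trhos$ to be any generating set of $\Ker\mu_\sigma$ gives $\dtsL\iso k\tQ/(\trhos)$, so that $\tQs=(\tQ_0,\tQ_1,\trhos)$ is by definition a bound quiver of $\dtsL$. Well-definedness of $\mu_\sigma$ needs no verification: since $k\tQ=T_S\tV$ is a tensor (path) algebra, prescribing the images of the generators — each $\xa\in Q_1$ to $(\mu(\xa),0)$ and each returning arrow $\beta_p$ to $(0,p^*)$ — extends uniquely to an algebra homomorphism. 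I would then observe that $\mu_\sigma$ is graded for $\dtsL=\tilde{\LL}_0+\cdots+\tilde{\LL}_{n+1}$, because every arrow of $\tQ$ is sent into $\tilde{\LL}_1=\LL_1+D\LL_n$; and that it is injective in degrees $0$ and $1$, since $Q_1$ is a basis of $\LL_1$ and $\{p^*\mid p\in\caM\}$ is a basis of $D\LL_n$. Consequently $\Ker\mu_\sigma$ is a graded ideal contained in degrees $\ge 2$, so $\trhos$ may be chosen to consist of homogeneous relations of length $\ge 2$, confirming that $\tQs$ is a genuine bound quiver.

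The remaining point is surjectivity, which I would reduce to a single bimodule-generation statement. Since $\LL$ is generated by $\LL_0$ and $\LL_1$, the subalgebra $(\LL,0)$ lies in the image of $\mu_\sigma$. In the trivial extension one has $D\LL^\sigma\cdot D\LL^\sigma=0$, so $\dtsL$ is generated as a $k$-algebra by $\LL_0$, $\LL_1$ and the bimodule $D\LL^\sigma$ as soon as $D\LL$ is generated by its top piece $D\LL_n$ under the $\LL$-bimodule action: in that case every element of $D\LL$ is a sum of products $a\cdot p^*\cdot b$ with $a,b\in\LL$ and $p\in\caM$, each of which is visibly in the image of $\mu_\sigma$. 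Thus surjectivity reduces to the claim that $D\LL$ is generated by $D\LL_n$ as an $\LL$-bimodule, and this is exactly the place where $n$-homogeneity is used.

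For this key step I would fix a bound path $q$ of length $t$ with dual vector $q^*\in D\LL_t$, and use $n$-homogeneity: when $t<n$ the path $q$ is not maximal, hence completes to a maximal bound path $p=uqw$ of length exactly $n$, with $w$ a path from $s(p)$ to $s(q)$ and $u$ a path from $t(q)$ to $t(p)$, and we may take $p\in\caM$. Writing the $\LL$-bimodule structure on $D\LL$ as $(a\cdot f\cdot b)(x)=f(bxa)$, a short computation on basis paths $x$ gives $(w\cdot p^*\cdot u)(x)=p^*(uxw)=\delta_{uxw,\,p}=\delta_{x,q}$, the last equality holding because $p=uqw$ and, paths being linearly independent, $uxw=uqw$ forces $x=q$; so $q^*=w\cdot p^*\cdot u$. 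The twist by $\sigma$ only replaces the right action $\cdot$ by $\cdot\,\sigma$, and since $\sigma$ is a graded automorphism the subbimodule generated by $D\LL_n$ is unchanged (one simply substitutes $\sigma^{-1}(u)$ for $u$). The hard part of the argument is precisely this completion step: one must know that every bound path extends to a maximal bound path of length \emph{exactly} $n$, and that $p^*$ detects $q$ and no competing subpath of $p$ — both guaranteed by $n$-homogeneity together with the linear independence of the basis $\caM$. Assembling the three steps yields $\dtsL\iso k\tQ/(\trhos)$, as asserted (compare Proposition 2.2 of \cite{fp02}).
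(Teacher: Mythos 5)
Your overall route is the same as the paper's: the paper's argument for Proposition \ref{quiverTE} is contained in the paragraph preceding it, where it is asserted (without proof) that $D\LL$ is generated by $D\LL_n$ as a $\LL$-bimodule, hence that $\dtsL$ is generated by $\tilde{\LL}_0$ and $\tilde{\LL}_1=\LL_1+D\LL_n$, and $\trhos$ is then taken to be a generating set of $\Ker\mu_{\sigma}$. Your reduction of the statement to that single bimodule-generation claim, together with the observations that $\mu_{\sigma}$ is graded and injective in degrees $\le 1$, is correct and faithfully reproduces what the paper leaves implicit.

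The gap is in your verification of the generation claim. The computation $(w\cdot p^*\cdot u)(x)=p^*(uxw)=\delta_{uxw,p}=\delta_{x,q}$ treats $uxw$ as a path in $kQ$, but $p^*$ is a functional on $\LL_n$ and $uxw$ must be read modulo $(\rho)$: for a basis path $x\in\caM_t$ with $x\neq q$, the element $uxw\in\LL_n$ may have a nonzero coefficient on $p$ when expanded in the basis $\caM_n$, so in general $w\cdot p^*\cdot u=q^*+\sum_{x\neq q}p^*(uxw)\,x^*\neq q^*$. This is not a repairable bookkeeping slip, because the generation claim itself does not follow from $n$-homogeneity alone. Take $Q_0=\{1,2,3\}$, arrows $\xa,\xb\colon 1\to 2$ and $\xc\colon 2\to 3$, and $\rho=\{\xc\xa-\xc\xb\}$. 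Every maximal bound path has length $2$, so $\LL$ is $2$-homogeneous with $\caM=\{\xc\xa\}$; yet $\xc(\xa-\xb)=0=(\xa-\xb)\cdot\rad\LL$, so $\xa-\xb$ is a degree-one element of the two-sided socle of $\LL$, and dually $\xa^*-\xb^*\in D\LL_1$ does not lie in $\LL\cdot D\LL_2\cdot\LL$. Here $\mu_{\sigma}$ is not surjective, and the quiver of $\dtL$ requires an extra arrow $2\to 1$ beyond the returning arrow $\beta_{\xc\xa}\colon 3\to 1$. What is actually needed (and what does hold in the paper's intended applications, e.g. $\tau$-slice algebras of stable $n$-translation quivers) is that no nonzero $x\in\LL_t$ satisfies $uxw=0$ in $\LL_n$ for all paths $u,w$ with $|u|+|w|=n-t$; this is a genuine hypothesis beyond $n$-homogeneity, asserted without proof in the paper and not established by your argument.
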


By replace trivial extension with twisted trivial extension, the quiver remains the same, but the relations change.
Write $\tQ=\tQs$ when $\sigma$ is identity, or when no confusion will appear.
If $Q$ is admissible $(n-1)$-translation quiver, the relation set has a nice description in \cite{g16}.

Assume that $\LL$ is  $n$-homogeneous algebra such that $\dtL$ is quadratic.
Then $\LL$ is quadratic since $\trho \cap V\otimes V$ spans the same ideal $(\rho)$ as $\rho$, and we may assume that $\rho$ is quadratic.
Now we determine the relation set $\trho$ in Proposition \ref{quiverTE} for the trivial extension $\dtL$.
We need only to determine its intersection with $\tV\otimes \tV$ since $\dtL$ is quadratic.
We can take it as a basis of the kernel of the linear map $$\mu|_{\tV\otimes \tV }:\tV\otimes \tV \to \tL_2 = \LL_2 + D\LL_{n-1}. $$
Since $\tV\otimes \tV = V\otimes V +  V\otimes V_{\caM} + V_{\caM} \otimes V+ V_{\caM} \otimes V_{\caM} $, and $\mu(V\otimes V) =\LL_2$, we have $\rho \subseteq \Ker \mu$ and $k\rho = \Ker \mu|_{V\otimes V}$.
On the other hand $\mu(V_{\caM} \otimes V_{\caM} ) =0$ implies  $V_{\caM} \otimes V_{\caM} \subseteq \Ker \mu $.
Write $\rho_{\caM} =\{\beta_{p}\otimes \beta_{p'} |p', p \in \caM, t(p)=s(p')\}$, it is a basis of $V_{\caM} \otimes V_{\caM} )$, and $ \rho_{\caM}\subseteq \trho$.

Now let $\caN = V\otimes V_{\caM} + V_{\caM} \otimes V$, we have $\mu (\caN) = D\LL_{n-1}$.
Let $\rho_{0} $ be a basis of $\Ker \mu |_{\caN}$.
The relation set for $\dtL$ is given in the following lemma.
\begin{lem}\label{relationsTE} If $\LL$ is $n$-homogeneous and $\dtL $ is quadratic, then
$$\trho = \rho \cup \rho_{\caM}\cup \rho_{0}.$$
\end{lem}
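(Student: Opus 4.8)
The plan is to exploit quadraticity to reduce everything to degree two and then read off the relations from a block decomposition of $\mu$. Since $\dtL$ is assumed quadratic, a relation set is obtained from any basis of $\Ker \mu|_{\tV \otimes \tV}$, where $\mu : k\tQ \to \dtL$ is the defining epimorphism. I would therefore compute this kernel directly, using the decomposition $\tV \otimes \tV = (V \otimes V) \oplus (V \otimes V_{\caM}) \oplus (V_{\caM} \otimes V) \oplus (V_{\caM} \otimes V_{\caM})$ of the source together with the decomposition $\tL_2 = \LL_2 \oplus D\LL_{n-1}$ of the target.

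The key step is to observe that $\mu|_{\tV \otimes \tV}$ is block diagonal with respect to these two decompositions. Indeed, $\mu(V \otimes V) \subseteq \LL_2$ since on $V \otimes V$ the map $\mu$ agrees with the original epimorphism $kQ \to \LL$; the products $\mu(\alpha \otimes \beta_p) = (\alpha,0)(0,p^*) = (0, \alpha \cdot p^*)$ and $\mu(\beta_p \otimes \alpha) = (0, p^*)(\alpha, 0) = (0, p^* \cdot \alpha)$ land in $D\LL_{n-1}$ by the bimodule grading of $D\LL$ inside $\dtL$ (here $\LL_1$ acts on $D\LL_n$ by lowering the degree by one), so that $\mu(\caN) \subseteq D\LL_{n-1}$ with $\caN = V \otimes V_{\caM} + V_{\caM} \otimes V$; and $\mu(V_{\caM} \otimes V_{\caM}) = 0$ because the product of two elements of the ideal $D\LL$ vanishes in the trivial extension. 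Thus $\mu$ carries the first summand into $\LL_2$, the middle two into $D\LL_{n-1}$, and kills the last.

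Because source and target both split as direct sums respected by $\mu$, the kernel splits accordingly:
$$\Ker \mu|_{\tV \otimes \tV} = \Ker \mu|_{V \otimes V} \oplus \Ker \mu|_{\caN} \oplus (V_{\caM} \otimes V_{\caM}).$$
By the definitions recalled just before the statement, $\rho$, $\rho_0$ and $\rho_{\caM}$ are bases of the three summands respectively, so their union is a basis of $\Ker \mu|_{\tV \otimes \tV}$ and hence a relation set for $\dtL$. The only point requiring genuine care is the block-diagonal claim, specifically checking that the two bimodule actions land exactly in the single graded piece $D\LL_{n-1}$ and that $V \otimes V_{\caM}$ together with $V_{\caM} \otimes V$ surject onto it; once these image containments are in place, the remainder is the elementary fact that the kernel of a direct sum of maps into a direct sum is the direct sum of the kernels.
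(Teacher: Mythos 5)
Your argument is correct and is essentially the same as the paper's: the paper likewise decomposes $\tV\otimes\tV$ into $V\otimes V$, $\caN=V\otimes V_{\caM}+V_{\caM}\otimes V$ and $V_{\caM}\otimes V_{\caM}$, observes that $\mu$ sends these respectively into $\LL_2$, $D\LL_{n-1}$ and $0$, and concludes that the kernel of $\mu|_{\tV\otimes\tV}$ is the direct sum of the three kernels, with bases $\rho$, $\rho_0$ and $\rho_{\caM}$. Your explicit check of the image containments via the trivial-extension multiplication is exactly the content the paper leaves implicit, so no gap remains.
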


Now let $\sigma$ be a graded automorphism of $\LL$, then $\sigma$ is determined by its action on $\LL_0$ and $\LL_1$.
Consider the bound quiver $\tQs$ of $\dtsL$.
It has the same quiver $\tQ=(\tQ_0,\tQ_1)$ as $\dtL$.
The relations in $\rho$ concern only elements in $\LL$, so they remain unchanged.
Since $(D\LL)^2=0$ in $\dtsL$, so $\rho_{\caM}$ is in the relation set of $\dtsL$.
The remaining relations are obtained by twisting those in $\rho_{0}$ by $\sigma$.
In fact, for $\xb_t, \xb_t'\in Q_{1,\caM}, \xa_t,\xa_t'\in Q_1$, we have  $\sum\limits_{t}b_t {\alpha}_t \cdot {\beta}_t + \sum\limits_{t} b'_t{\beta}'_{t} \cdot {\xa}'_t = 0$ in $\dtsL$ if and only if $ \sum\limits_{t} b_t{\xa}_t {\beta}_t + \sum\limits_t b'_t {\beta}'_{t} \sigma({\xa}'_t) =0$ in $\dtL$ for $b_t,b'_t \in k$.
Let $$\arr{lll}{\rho_{\sigma, 0} &=& \{\sum\limits_{\xa\in Q_1, p\in \caM_n} b_{\xa,p}\xa \otimes {\beta}_p + \sum\limits_{\xa\in Q_1, p\in \caM_n} b'_{\xa,p}  {\beta}_{p}  \otimes \sigma({\xa}) |\\&& \quad \sum\limits_{\xa\in Q_1, p\in \caM_n} b_{\xa,p}\xa \otimes {\beta}_p + \sum\limits_{\xa\in Q_1, p\in \caM_n} b'_{\xa,p}  {\beta}_{p}  \otimes {\xa}  \in \rho_{0} \}  .}$$
Let $\trho^{\sigma} = \rho \cup \rho_{\caM} \cup\rho_{\sigma, 0}$ and let $\tQ^{\sigma} = (\tQ_0,\tQ_1,\trho^{\sigma})$.
We have the following corollary.

\begin{cor}\label{relationsTEtwisted}Assume that $\LL$ is $n$-homogeneous and $\dtL $ is quadratic, and $\sigma$ is an graded automorphism of $\LL$. Then $\tQs$ is the bound quiver of $\dtsL$.
\end{cor}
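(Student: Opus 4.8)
The plan is to follow the template of Lemma~\ref{relationsTE}, inserting the automorphism $\sigma$ only where it is forced to appear, and then to verify that no relations beyond degree two are created. First I would observe that $\dtsL$ and $\dtL$ have the \emph{same} underlying graded $S$-bimodule $\tilde{\LL}_0 + \cdots + \tilde{\LL}_{n+1}$ and the same returning arrow quiver $\tQ = (\tQ_0,\tQ_1)$, since $\tQ$ is built from $Q$ and $\caM$ alone and $\sigma$, being graded, only alters the right action of $\LL$ on $D\LL$. Thus, by Proposition~\ref{quiverTE}, it suffices to produce a basis of $\Ker\mu_{\sigma}|_{\tV\otimes\tV}$ and to identify it with $\trho^{\sigma} = \rho\cup\rho_{\caM}\cup\rho_{\sigma,0}$. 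As in Lemma~\ref{relationsTE} I would split $\tV\otimes\tV = V\otimes V + V\otimes V_{\caM} + V_{\caM}\otimes V + V_{\caM}\otimes V_{\caM}$ and treat each summand separately.

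On $V\otimes V$ the map $\mu_{\sigma}$ takes values in $\LL_2$ and never involves a returning arrow, so it coincides with $\mu$; hence $\Ker\mu_{\sigma}|_{V\otimes V} = k\rho$, exactly as before. On $V_{\caM}\otimes V_{\caM}$ every product lands in $(D\LL)^2 = 0$ regardless of $\sigma$, so this summand is annihilated and contributes $\rho_{\caM}$. Consequently the twist can only enter through the mixed summand $\caN = V\otimes V_{\caM} + V_{\caM}\otimes V$, and the whole point is to compute $\Ker\mu_{\sigma}|_{\caN}$.

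The key step is to separate the two multiplication orders on $\caN$. For $\xa\in Q_1$ and $p\in\caM$, the product $\xa\cdot\beta_p$ uses the \emph{left} action of $\LL_1$ on $D\LL^{\sigma}$, which is untwisted, so $\mu_{\sigma}(\xa\otimes\beta_p) = \xa\cdot p^{*} = \mu(\xa\otimes\beta_p)$; whereas $\beta_p\cdot\xa$ uses the \emph{right} action, which is twisted, so $\mu_{\sigma}(\beta_p\otimes\xa) = p^{*}\cdot\sigma(\xa)$. Comparing with $\mu$ this yields precisely the displayed equivalence: a combination $\sum b_{\xa,p}\,\xa\otimes\beta_p + \sum b'_{\xa,p}\,\beta_p\otimes\xa$ lies in $\Ker\mu_{\sigma}$ if and only if $\sum b_{\xa,p}\,\xa\otimes\beta_p + \sum b'_{\xa,p}\,\beta_p\otimes\sigma(\xa)$ lies in $\Ker\mu|_{\caN} = k\rho_0$, which is the defining condition of $\rho_{\sigma,0}$. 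Because $\sigma$ restricts to a linear automorphism of $\LL_1$, replacing $\xa$ by $\sigma(\xa)$ in the second sum is an invertible change of variables on $\caN$; it therefore carries the basis $\rho_0$ of $\Ker\mu|_{\caN}$ onto a basis $\rho_{\sigma,0}$ of $\Ker\mu_{\sigma}|_{\caN}$. Assembling the three summands gives $\trho^{\sigma}$ as a basis of $\Ker\mu_{\sigma}|_{\tV\otimes\tV}$, i.e. the desired quadratic relation set.

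The step I expect to require the most care is confirming that $\dtsL$ is genuinely \emph{quadratic}, so that these degree-two relations generate the full kernel $\Ker\mu_{\sigma}$ rather than merely its degree-two part. Here I would exploit that any word containing two returning arrows is sent to $0$ by both $\mu$ and $\mu_{\sigma}$, since somewhere along it one multiplies two elements of $D\LL$ and $(D\LL)^2 = 0$; hence $\Ker\mu$ and $\Ker\mu_{\sigma}$ agree on the span of such words, and differ only on words with at most one returning arrow, where they are matched degree by degree by applying $\sigma$ to the arrows lying to the right of the returning arrow. Transporting the hypothesis that $\Ker\mu$ is generated in degree two (equivalently, that $\dtL$ is quadratic) across this matching is the delicate bookkeeping point, the only genuine subtlety being to keep the twist strictly on the right-hand factor throughout.
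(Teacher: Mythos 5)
Your argument is correct and follows essentially the same route as the paper: the same decomposition of $\tV\otimes \tV$ into $V\otimes V$, $\caN$ and $V_{\caM}\otimes V_{\caM}$, with the observation that the twist by $\sigma$ enters only through the right action on the mixed summand $\caN$, which is exactly the discussion the paper gives just before the corollary. Your final paragraph on transporting quadraticity from $\dtL$ to $\dtsL$ makes explicit a point the paper leaves implicit, but it does not change the method.
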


\section{$(n+1)$-preprojective algebra of  dual  $n$-homogeneous algebra}

Let $\GG$ be a finite dimensional algebra of global dimension $n$, the $(n+1)$-preprojective algebra of $\GG$ is defined as the tensor algebra $$\Pi(\GG)= T_{\GG} \Ext_{\GG}^n(D\GG,\GG)\\ \simeq T_{\GG} \Ext_{\GG^e}^n (\GG,\GG^e) ,$$ where $\GG^e = \GG\otimes \GG$ is the envelope algebra of $\GG$.
In \cite{th16}, a description of $(n+1)$-preprojective algebra for Koszul algebra using quiver with relations is given.

Now we study the bound quiver of the $(n+1)$-preprojective algebra of the quadratic dual of an $n$-homogeneous algebra, we call such algebra a {\em dual $n$-homogeneous algebra}.

Throughout this section, we assume that $\LL $ is a Koszul $n$-homogeneous algebra, and that $\GG =\LL^{!,op}$ is its quadratic dual.
Since $\LL$ is a Koszul algebra with Loewy length $n+1$, $\GG$ is Koszul  of global dimension $n$ (see Theorem 2.6.1 and Subsection 2.8 of \cite{bgs96}).

Note that $\LL_1= V =kQ_1$ and $\GG_1 = (DV)^{op}_1 = kQ_1 $, by identifying the arrow set $Q_1$ with its dual basis.
The bound quiver of $\GG$ is $Q^{\perp} =  (Q_0, Q_1, \rho^{ \perp })$ and $\GG = T_S V /(\rho^\perp)$, where $\rho^{\perp}$ can be taken as a basis of the orthogonal subspace of $\rho$ in $kQ_2 \simeq V \otimes V$.
Let $R$ be the space spanned by the relation $\rho^{\perp}$ in $kQ_2 $.
Let $K^{l}_{l} = \cap_{t} V\otsuGz{t} \otsGz R \otsGz V \otsuGz{l-2-t} $, then it follows from the Subsection 2.8 of \cite{bgs96} that  $K_t^t = \LL_t$.
It follows from Theorem 2.6.1 of \cite{bgs96} and \cite{h89} that we have a Koszul bimodule complex,
\eqqc{KcomplexGG}{0 \lrw \GG\otsGz \LL_n \otsGz\GG\slrw{f=f_n} \GG\otsGz \LL_{n-1} \otsGz\GG \slrw{f_{n-1}} \cdots \slrw{f_1} \GG\otsGz \GG \slrw{f_0} \GG \lrw 0.}
For a path $p$ of length $t$, write $\xa_l(p)=\xa$, $\chi_l(p)=q$ if $p=\xa q$ for $\xa\in Q_1$ and $q\in Q_{t-1}$, and write $\xa_r(p)=\xa'$, $\chi_r(p)=q'$ if $p=q'\xa'$ for $\xa'\in Q_1$ and $q'\in Q_{t-1}$.
The bimodule map  $f_t: \GG\otimes \LL_t \otimes \GG \to \GG\otimes \LL_{t-1} \otimes \GG$ is defined by \eqqcnn{nmapf}{f_t (p) = \xa_l(p) \otimes \chi_l(p) +(-1)^{t} \chi_r(p) \otimes \xa_r(p)}.
%and this

Since $\GG$ is Koszul, \eqref{KcomplexGG} is the projective resolution of $\GG$ as $\GG^e$ module.
Following \cite{th16}, by applying $\hhom_{\GG^e }(\quad, \GG^e)$ we get
\eqqcn{extformula}{\arr{l}{\cdots\lrw \hhom_{\GG^e}(\GG \otimes \LL_{n-1} \otimes \GG, \GG^e ) \\ \qquad \slrw{f^*} \hhom_{\GG^e}(\GG \otimes \LL_{n} \otimes \GG, \GG^e ) \lrw \Ext_{\GG^e}^n(\GG,\GG^e) \lrw 0}.}
Note that $\GG \otimes D\LL_{n-1} \otimes \GG $  is  a $\GG$-bimodule generated by $D\LL_{n-1}$, so $\Img f^*$ is a $\GG$-bimodule generated by $f^*(D\LL_{n-1})$.

$f^*$ induces a bimodule map, denoted again by the same notation,  $f^*: \GG\otimes D\LL_{n-1} \otimes \GG \to \GG\otimes D\LL_{n} \otimes \GG$, which is the composition of the bimodule maps \small\eqqcn{ncompmaps}{\arr{lll}{\GG\otimes D\LL_{n-1} \otimes \GG  &\slrw{\Psi} & \hhom_{\GG^e}( \GG\otimes \LL_{n-1} \otimes \GG, \GG^e)\\ & \slrw{\hhom_{\GG^e}(f,\GG^e)} & \hhom_{\GG^e}( \GG\otimes \LL_{n} \otimes \GG, \GG^e) \slrw{\Phi} \GG\otimes D\LL_{n} \otimes \GG ,} }\normalsize
where $\Psi:  x\otimes q^* \otimes y \to \{u \otimes q' \otimes v \to q^*(q') uy \otimes xv \}$, $\hhom_{\GG^e}(f,\GG^e):  \psi \to \psi \circ f $ for $\psi$ from $\GG\otimes \LL_{n-1} \otimes \GG$ to $\GG^e$ and $\Phi:  \phi \to \sum\limits_{p,t}  \phi_{1,t}(p) \otimes p^* \otimes \phi_{2,t}(p) $ for $x,y,u,v \in \GG$, $q,q'\in \LL_{n-1}$, $\phi \in \hhom_{\GG^e}(\GG \otimes \LL_n \otimes \GG, \GG^e) $ such that $\phi(z)= \sum\limits_{t}\phi_{2,t}(z)\otimes \phi_{1,t}(z)$ for $z\in \GG\otimes \LL_{n} \otimes \GG$.

So $\Ext^n_{\GG}(D\GG,\GG) \simeq \Ext_{\GG^e}^n(\GG,\GG^e)\simeq (\GG \otimes D\LL_n \otimes \GG)/\Img f^*$ as $\GG$-bimodules.

By definition, \eqqcnn{}{\Pi(\GG) = T_{\GG}\Ext_{\GG}(D\GG,\GG) = T_{\GG}(( \GG \otimes D\LL_n \otimes \GG) /( f^*(D\LL_{n-1}))) }.
Since $D\LL_n=  V_{\caM}$ and $\tV =V +V_{\caM}$, one see easily that we have a natural map $\theta: T_S \tV =k\tQ \to \Pi(\GG)$, which extends the natural epimorphism from $kQ \to \GG$.
Let $\rho_{\caM^\perp} = \{f^*(q^*_r)| q_r\in \caM_{n-1}\}$, then $\Ker \theta = (\rho^\perp \cup \rho_{\caM^\perp})$ is the $k\tQ$-bimodule generated by $\rho^\perp \cup \rho_{\caM^\perp}$.
Let $\trho^* =\rho^\perp \cup  \rho_{\caM^\perp}$, we get a description of $(n+1)$-preprojective algebra $\Pi(\GG)$ of the dual $n$-homogeneous algebra $\GG$ with bound quiver $Q^\perp =(Q_0,Q_1,\rho^{\perp})$ as in \cite{th16}.
\begin{pro}\label{preproalg}{$ \Pi(\GG) \simeq k\tQ/(\trho^*)$.}
\end{pro}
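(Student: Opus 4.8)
The plan is to exhibit $\theta$ as a composite of two natural quotient maps and read off its kernel from them. Both $k\tQ = T_S\tV$ and $\Pi(\GG)=T_{\GG}M$, where $M=\Ext^n_{\GG}(D\GG,\GG)\iso(\GG\otimes V_{\caM}\otimes\GG)/\Img f^*$, carry a grading by the number of returning arrows: I assign each arrow of $Q_1$ weight $0$ and each $\beta_p\in Q_{1,\caM}$ weight $1$, so that the weight-$d$ part of $k\tQ$ is spanned by the paths containing exactly $d$ returning arrows, while the weight-$d$ part of $T_{\GG}M$ is $M^{\otimes_{\GG}d}$. Since $\theta$ sends $V$ into $\GG=(T_{\GG}M)_0$ and $V_{\caM}$ into $M=(T_{\GG}M)_1$, it is a graded algebra map, and it suffices to compute its kernel weight by weight.

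First I would factor out $\rho^{\perp}$. As $\rho^{\perp}\subseteq V\otimes V$ involves only arrows of $Q$, a path with $d$ returning arrows has the shape $p_0\,\beta_{q_1}\,p_1\cdots\beta_{q_d}\,p_d$ with $p_i\in kQ$, and reducing each $p_i$ modulo $(\rho^{\perp})$ replaces it by its image in $\GG$. Writing $\widetilde{M}=\GG\otimes V_{\caM}\otimes\GG$ for the free $\GG$-bimodule on $V_{\caM}$, the weight-$d$ part on the left becomes $\GG\otimes V_{\caM}\otimes\GG\otimes\cdots\otimes V_{\caM}\otimes\GG$ with $d$ factors $V_{\caM}$, which is exactly $\widetilde{M}^{\otimes_{\GG}d}$; hence
$$k\tQ/(\rho^{\perp})\;\iso\;T_{\GG}\widetilde{M},$$
and under this identification $\theta$ becomes the map $T_{\GG}\widetilde{M}\to T_{\GG}M$ induced by the canonical bimodule surjection $\widetilde{M}=\GG\otimes V_{\caM}\otimes\GG\twoheadrightarrow M$.

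Next I would factor out $\rho_{\caM^{\perp}}$. By construction $\rho_{\caM^{\perp}}=\{f^*(q_r^*)\mid q_r\in\caM_{n-1}\}$ is the image, under the identifications $\Psi$ and $\Phi$, of $f^*$ applied to the basis $\{q_r^*\}$ of $D\LL_{n-1}$; since $\Img f^*$ is the $\GG$-bimodule generated by $f^*(D\LL_{n-1})$, the set $\rho_{\caM^{\perp}}$ generates $N:=\Img f^*$ as a $\GG$-bimodule and $M=\widetilde{M}/N$. It then remains to invoke the formal fact that for a bimodule surjection $\widetilde{M}\twoheadrightarrow\widetilde{M}/N=M$ the two-sided ideal of $T_{\GG}\widetilde{M}$ generated by $N$ (placed in weight $1$) has quotient $T_{\GG}M$: in weight $d$ this ideal equals $\sum_{i+j=d-1}\widetilde{M}^{\otimes_{\GG}i}\otimes_{\GG}N\otimes_{\GG}\widetilde{M}^{\otimes_{\GG}j}$, and right-exactness of $\otimes_{\GG}$ identifies the quotient of $\widetilde{M}^{\otimes_{\GG}d}$ by it with $(\widetilde{M}/N)^{\otimes_{\GG}d}=M^{\otimes_{\GG}d}$. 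Combining the two steps, $\theta$ factors as $k\tQ\twoheadrightarrow T_{\GG}\widetilde{M}\twoheadrightarrow T_{\GG}M=\Pi(\GG)$ with kernel $(\rho^{\perp})+(N)=(\rho^{\perp}\cup\rho_{\caM^{\perp}})=(\trho^*)$, which is the assertion.

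The main obstacle is the weight-$1$ identification, namely checking that $\rho_{\caM^{\perp}}$ generates precisely $\Img f^*$, neither more nor less; this is exactly where Koszulity of $\GG$ is used, since it guarantees that \eqref{KcomplexGG} is a genuine projective bimodule resolution, so that $\Ext^n_{\GG}(D\GG,\GG)$ is computed as the cokernel $\widetilde{M}/\Img f^*$ with the explicit relations coming from $f^*$. Once this degree-one presentation of $M$ is pinned down, the passage to all higher weights is the purely formal tensor-algebra computation above and introduces no further relations, in contrast to the trivial extension where the returning arrows additionally satisfy $\rho_{\caM}$.
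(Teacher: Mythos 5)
Your argument is correct and takes essentially the same route as the paper, which likewise presents $\Pi(\GG)$ as $T_{\GG}\bigl((\GG\otimes D\LL_n\otimes\GG)/(f^*(D\LL_{n-1}))\bigr)$ and reads off $\Ker\theta=(\rho^{\perp}\cup\rho_{\caM^{\perp}})$ from the natural map $\theta: k\tQ\to\Pi(\GG)$. The paper merely asserts this identification of the kernel (following \cite{th16}), whereas you supply the weight-grading and right-exactness details that justify it; there is no gap.
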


Now we determine $\rho_{\caM^\perp}$, the image of $\caM_{n-1}$ under the map $f^*$.

Assume that $\caM_{n-1} = \{q_1, \cdots, q_{m}\}$ is a basis of $\LL_{n-1}$ with each $q_r$ a bound path of length $n-1$.
Take $q_r\in \caM_{n-1}$, a path from $i$ to $j$.
For each $\xa \in  Q_1 e_j $ and $\xa' \in e_i Q_1$, $\xa q_r, q_r\xa' \in \LL_n$.
So there are $a^{\xa', q_r}_p, a^{q_r,\xa'}_p  \in k$ for $p\in \caM_n$ such that \eqqc{relpqxa}{ \xa q_r =\sum\limits_{p \in \caM_n e_i } a^{\xa, q_r}_p p \mbox{ and } q_r \xa'=\sum\limits_{p \in e_j\caM_n } a^{q_r,\xa'}_p p,} and thus $p^*(\xa q_r) =a^{\xa, q_r}_p$ and $p^*(q_r \xa') =a^{q_r,\xa'}_p$ since $\caM^*$ is the dual basis of $\caM$ in $D\LL_n$.

For each $q\in \caM_{n-1}$, $q^* \in D\LL_{n-1} \subseteq \GG \otimes D\LL_{n-1} \otimes \GG$, let $\zeta_q= f^*(q^*)= f^*(1\otimes q^* \otimes 1)= \Phi\hhom_{\GG^e}(f,\GG^e) \Psi (1\otimes q^* \otimes 1) =\Phi \Psi (1\otimes q^* \otimes 1) f.$
Since $\caM^*_{n-1}$ is a generating set of $\GG$-bimodule $\GG \otimes D\LL_{n-1} \otimes \GG $, such elements generating $\Img f^*$.

We have the following lemma.
\begin{lem}\label{fqr}
For each $q_r \in e_j \caM_{n-1} e_i$,
\eqqc{fqformula}{ \zeta_{q_r}= \sum\limits_{\xa\in Q_1 e_j, p \in  \caM_n e_i} a^{\xa, q_r}_{p} p^*\otimes \xa +(-1)^n \sum\limits_{\xa\in Q_1,p \in e_j \caM_n}  a^{ q_r,\xa}_{p} \xa\otimes p^*.}
\end{lem}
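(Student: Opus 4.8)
The plan is to prove \eqref{fqformula} by a direct computation, substituting the generator $q_r^*$ into the composite $f^*=\Phi\circ\hhom_{\GG^e}(f,\GG^e)\circ\Psi$ and evaluating the three maps one after another. Since $\zeta_{q_r}=f^*(1\otimes q_r^*\otimes 1)$ lives in $\GG\otimes D\LL_n\otimes\GG$ and we ultimately view it inside $\tV\otimes\tV$ (with $p^*$ identified with the returning arrow $\beta_p$), it suffices to track components of the form $1\otimes p^*\otimes\xa$ and $\xa\otimes p^*\otimes 1$ with $\xa\in Q_1$; as we shall see, these are in fact the only ones the computation produces, because the differential $f_n$ always splits off a single outer arrow and leaves $1$ on the other side.

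First I would apply $\Psi$ to obtain the functional $\psi=\Psi(1\otimes q_r^*\otimes 1)$, namely $u\otimes q'\otimes v\mapsto q_r^*(q')\,u\otimes v$. Next I precompose with $f=f_n$ and use its explicit description: on a generator $1\otimes p\otimes 1$ the differential contributes a \emph{left part} $\xa_l(p)\otimes\chi_l(p)\otimes 1$ and a \emph{right part} $(-1)^n\,1\otimes\chi_r(p)\otimes\xa_r(p)$. Pairing the middle $\LL_{n-1}$-factor against $q_r^*$ through $\psi$ collapses each part to an element of $\GG^e$ whose two legs are the recorded outer arrow and $1$. Finally I apply $\Phi$, which transposes the two legs and reinserts $p^*$: the left part then yields terms $1\otimes p^*\otimes\xa$, with $\xa$ sitting to the right of $p^*$ and with the $+$ sign, matching the first sum; the right part yields terms $\xa\otimes p^*\otimes 1$ carrying the sign $(-1)^n$, matching the second sum. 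Summing over $p\in\caM_n$ then produces two sums of exactly the shape of \eqref{fqformula}.

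It remains to identify the coefficients, and this is where I expect the real work to lie. The computation attaches to $1\otimes p^*\otimes\xa$ the scalar obtained by pairing $q_r^*$ with the degree $(n-1)$ piece that results from deleting the outer arrow $\xa$ from $p$, and I must show this scalar equals $a^{\xa,q_r}_p=p^*(\xa q_r)$, and symmetrically $a^{q_r,\xa}_p=p^*(q_r\xa)$ on the right, as in \eqref{relpqxa}. Conceptually this is the adjunction between the Koszul differential $f$, which splits an outer arrow off a length-$n$ element, and multiplication by that arrow in $\LL$, which attaches it to a length-$(n-1)$ element: the two families of structure constants are transposes of each other under the nondegenerate pairing between $\LL_t$ and $D\LL_t$. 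The main obstacle is to carry this out cleanly while keeping the chosen bases $\caM_{n-1},\caM_n$ and their dual bases consistent through the Koszul identification $\LL_t\cong K_t^t$ used to define $f$; Lemma \ref{dualtransitionalmatrix} is the tool for passing between the path basis that records the outer arrows and the basis in which the multiplication constants $a^{\xa,q_r}_p$ are defined, and one must take care that the normalizations on the two sides of the pairing agree. Once the coefficients are matched and the sign $(-1)^n$ together with the left/right asymmetry are accounted for, \eqref{fqformula} follows.
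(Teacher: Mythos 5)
Your proposal follows essentially the same route as the paper's proof: evaluate $f^*=\Phi\circ\hhom_{\GG^e}(f,\GG^e)\circ\Psi$ on $1\otimes q_r^*\otimes 1$, observe that the outcome lies in $\GG_1\otimes D\LL_n+D\LL_n\otimes\GG_1$ with the sign $(-1)^n$ attached to the right-hand part, and then identify the resulting coefficients with the structure constants of \eqref{relpqxa} by a transpose-of-bases argument. The coefficient-matching step you defer as ``the real work'' is exactly where the paper's effort goes: it replaces a basis element $p_{t_0}$ of $e_j\caM_n$ by $q_r\xa'$ (respectively $\xa q_r$), reads off the coefficient $(-1)^n$ (respectively $1$) in the new basis, and converts back to the original basis $\caM_n$ with Lemma \ref{dualtransitionalmatrix} --- precisely the use of that lemma you anticipate.
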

\begin{proof}
For each bound path $p\in e_{j}\LL_n$, we have $\Psi (1\otimes q_r^* \otimes 1) f(1\otimes p \otimes 1)=(-1)^n q_r^*(\chi_r(p)) e_j  \otimes \xa_r(p)$ and for each $p\in \LL_n e_i$, we have $\Psi (1\otimes q_r^* \otimes 1) f(1\otimes p \otimes 1)= \xa_l(p) \otimes e_i q_r^*(\chi_l(p))$.

This implies that $\zeta_{q_r} = \Phi\Psi (1\otimes q_r^* \otimes 1)f  $ is in the subspace of $ \GG_1\otimes D\LL_{n} + D\LL_n\otimes \GG_{1}\simeq \caN$ spanned by the set  $\{\xa \otimes p^*|\xa\in Q_1, p\in e_j \caM_n \} \cup \{p^* \otimes \xa |\xa\in Q_1, p\in \caM_n e_i \} $.
Write \eqqcn{fqrex}{\zeta_{q_r}=\sum\limits_{\xa\in Q_1, p\in \caM_n e_i} c_{\xa,p}(q_r) p^*\otimes\xa + \sum\limits_{\xa\in Q_1, p\in e_j \caM_n} c_{p,\xa}(q_r) \xa\otimes p^*.}
This is an element in the dual space of $\LL_1\otimes \LL_{n} + \LL_n\otimes \LL_{1}$, which is also isomorphic to $\caN$  via \eqref{dualdef}.

Embed $\LL_n$ in $\LL_1\otimes \LL_{n}$ by sending a path $p'$ to $\xa_l(p') \otimes p'$, and in $ \LL_n\otimes \LL_{1}$ by sending a path $p'$ to $p' \otimes \xa_r(p')$.
So $\zeta_{q_r}$ acts on it by  $(\xa\otimes p^*) (p') = (\xa\otimes p^*)(\xa_r(p') \otimes p') = \xa^*(\xa_r(p'))\cdot p^*(p')$ and $(p^*\otimes\xa) (p') = ( p^*\otimes\xa)( p'\otimes\xa_l(p')) = \xa^*(\xa_l(p'))\cdot p^*(p')$.
Then we have that $\zeta_{q_r} (p') = \zeta_{q_r} (\xa_r(p') \otimes  p')  = c_{\xa_r(p'),p'}(q_r) $.
Especially, we have that $\zeta_{q_r} (p') = c_{p',\xa_r(p')}(q_r) = (-1)^n$ if $p' = q_r\xa_r(p')$ and $\zeta_{q_r} (p')  = 1$ if $p' = \xa_l(p')q_r$.

Now we determine $c_{\xa', p}(q_r)$ for $p\in \caM_n$ with $p^*( q_r\xa') \neq 0$.

Write $e_j\caM_n = \{p_t\}_t$, it is a basis of $e_j \LL_n$.
For each $t_0$ with $a^{q_r, \xa'}_{p_{t_0}} \neq 0$, by replacing $p_{t_0}$ with $q_r\xa'$, we obtain a new basis $\{{p'}_t\}_t$ of $e_j\LL_n$ with $p_t =p'_t$ for $t\neq t_0$ and ${p'}_{t_0} = q_r \xa'$.
Then $\zeta_{q_r}(q_r\xa') = (-1)^n e_j \otimes \xa' $ and the coefficient of $\xa'\otimes (q_r\xa')^*$ in $\zeta_{q_r}$ is $(-1)^n$.
On the other hand, we have  ${p'}_t^*({p'}_{t'}) = \delta_{t,t'}$, so the coefficient of $\xa'\otimes {p'}_t^*$ is zero for $p'_t \neq q_r \xa'$.
Let $\zeta_{q_r}=\sum\limits_{\xa\in Q_1}\sum\limits_{t} c'_{\xa, p_t}(q_r) {p'}_t^*\otimes\xa + \sum\limits_{\xa\in Q_1}\sum\limits_{t} c'_{p_t,\xa}(q_r) \xa\otimes {p'}_t^*$, then $c'_{p'_{t_0},\xa'}(q_r) =(-1)^n$ and $c'_{p'_{t},\xa'}(q_r) =0$ for $t\neq t_0$.
This implies that $c_{p_{t_0},\xa'}(q_r) = \sum\limits_{t}a^{\xa',q_r}_{p_t} c'_{\xa',p'_{t}}(q_r) = (-1)^n a^{q_r,\xa'}_{p_{t_0}} $, by Lemma \ref{dualtransitionalmatrix}.
Do this for all $t_0$ with $a^{q_r,\xa'}_{p_{t_0}}\neq 0$ and for all  $\xa'\in Q_1$, we get $c_{p,\xa}(q_r)= (-1)^n a^{q_r,\xa}_{p}$ for $\xa \in e_i Q_1$ and $p \in e_j \caM_n$.

Similarly, we have that $c_{\xa, p}(q_r)=  a^{\xa, q_r}_{t}$ for $\xa \in Q_1 e_j$ and $p\in \caM_n e_i$.
This proves our lemma.
\end{proof}

So we have the following description of $\rho_{\caM^\perp}$.
\begin{cor}\label{rltion} Assume \eqref{relpqxa}, then
\small\eqqcn{relation_prepro}{\arr{lll}{\rho_{\caM^\perp} &=& \{
\sum\limits_{\xa\in Q_1 e_j, p\in \caM_n e_i}a^{\xa, q}_{p} p^*\otimes \xa +(-1)^n \sum\limits_{\xa\in e_i Q_1, p\in e_j\caM_n} a^{ q,\xa}_{p} \xa\otimes p^* \\ && \qquad |i,j\in Q_0, q \in e_j \caM_{n-1} e_i\}.}}\normalsize
\end{cor}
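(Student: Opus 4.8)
The plan is to read the statement off directly from Lemma \ref{fqr} together with the definition of $\rho_{\caM^\perp}$. Recall that $\rho_{\caM^\perp} = \{f^*(q^*) \mid q \in \caM_{n-1}\}$ and that for each $q \in \caM_{n-1}$ we have written $\zeta_q = f^*(q^*)$. Hence it suffices to expand $\zeta_q$ for every $q$ in the chosen basis $\caM_{n-1}$ of $\LL_{n-1}$ and to collect the resulting elements into a set.

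First I would partition $\caM_{n-1}$ according to source and target, writing each $q \in \caM_{n-1}$ uniquely as an element of $e_j \caM_{n-1} e_i$ with $i = s(q)$ and $j = t(q)$. For such a $q$, Lemma \ref{fqr} already supplies the explicit value of $\zeta_q = f^*(q^*)$ in formula \eqref{fqformula}. The only cosmetic discrepancy between that formula and the displayed expression in the corollary is the index range of the second summation: in \eqref{fqformula} it runs over all $\xa \in Q_1$, whereas the corollary restricts to $\xa \in e_i Q_1$. I would resolve this by observing that the product $q\xa$ is defined only when $\xa$ ends at $i = s(q)$, so the structure constant $a^{q,\xa}_p$ of \eqref{relpqxa} vanishes whenever $\xa \notin e_i Q_1$. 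Trimming the second sum to $\xa \in e_i Q_1$ therefore leaves $\zeta_q$ unchanged and reproduces verbatim the summand appearing in the corollary.

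Letting $q$ range over all of $\caM_{n-1}$, equivalently over $e_j \caM_{n-1} e_i$ for all $i,j \in Q_0$, and taking the union of the corresponding elements $\zeta_q$ then yields exactly the claimed description of $\rho_{\caM^\perp}$. The substantive computation — the identification of the coefficients of $\zeta_{q_r}$ with the structure constants $a^{\xa,q}_p$ and $a^{q,\xa}_p$, carried out via the dual-basis change-of-basis of Lemma \ref{dualtransitionalmatrix} — has already been done inside Lemma \ref{fqr}, so no genuine obstacle remains. The sole point demanding care is the index bookkeeping: ensuring that the second sum is correctly restricted to $\xa \in e_i Q_1$ and that the partition $\caM_{n-1} = \bigcup_{i,j} e_j \caM_{n-1} e_i$ counts each basis path exactly once.
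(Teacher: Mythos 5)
Your proposal is correct and matches the paper's treatment: the corollary is stated there without a separate proof, as an immediate consequence of Lemma \ref{fqr} together with the definition $\rho_{\caM^\perp}=\{f^*(q^*)\mid q\in\caM_{n-1}\}$. Your observation that restricting the second sum to $\xa\in e_i Q_1$ is harmless because $a^{q,\xa}_p$ vanishes unless $t(\xa)=s(q)=i$ (the only range on which \eqref{relpqxa} defines these constants) is exactly the right bookkeeping.
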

In fact, the relation set is minimal in the following sense.
\begin{lem}\label{lid}
$\rho_{\caM^\perp}$ is a linearly independent set.
\end{lem}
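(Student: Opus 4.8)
The plan is to turn a hypothetical linear dependence among the $\zeta_{q}$ into a statement about multiplication in $\LL$, and then to kill that statement using $n$-homogeneity in the guise of ``no bound path of length $n-1$ is maximal.'' Concretely, suppose $\sum_{q\in\caM_{n-1}}\lambda_q\zeta_q=0$ with $\lambda_q\in k$, and set $w=\sum_q\lambda_q q\in\LL_{n-1}$. Each $\zeta_q$ lies in $\caN\cong (D\LL_n\otsGz\GG_1)\oplus(\GG_1\otsGz D\LL_n)$, and the two summands are spanned by the disjoint families $\{p^*\otimes\xa\}$ and $\{\xa\otimes p^*\}$; hence the relation splits into two independent ones.

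Reading off, from \eqref{fqformula} and Corollary \ref{rltion}, the coefficient of a fixed $p^*\otimes\xa$ in the first summand gives $\sum_q\lambda_q a^{\xa,q}_p=\sum_q\lambda_q\,p^*(\xa q)=p^*(\xa w)$, which must vanish for every $p\in\caM_n$ and every $\xa\in Q_1$. Since $\caM_n^*$ is the dual basis of the basis $\caM_n$ of $\LL_n$, this forces $\xa w=0$ in $\LL_n$ for all $\xa$, i.e. $\LL_1 w=0$; the second summand yields symmetrically $w\LL_1=0$. Equivalently, in the language of the proof of Lemma \ref{fqr}, where $\zeta_q$ acts on the copies of $\LL_n$ embedded via $p'\mapsto\xa_l(p')\otimes p'$ and $p'\mapsto p'\otimes\xa_r(p')$, the functional $\sum_q\lambda_q\zeta_q$ is seen to coincide (up to the sign $(-1)^n$) with $w^*\circ\chi_l$ and with $w^*\circ\chi_r$, where $w^*=\sum_q\lambda_q q^*\in D\LL_{n-1}$. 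Thus the Lemma reduces to showing that $w^*$ cannot annihilate both $\operatorname{span}\{\chi_l(p):p\in\caM_n\}$ and $\operatorname{span}\{\chi_r(p):p\in\caM_n\}$ unless $w^*=0$, i.e. that one of the truncation maps $\chi_l,\chi_r\colon\LL_n\to\LL_{n-1}$ is surjective.

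To obtain this surjectivity I would invoke $n$-homogeneity directly: every bound path of length $n-1$ is strictly shorter than the common length $n$ of the maximal bound paths, hence is not maximal and extends to a bound path of length $n$, which is exactly a preimage under $\chi_l$ or $\chi_r$. The main obstacle is that the relations of $\LL$ need not be monomial, so $\chi_l$ and $\chi_r$ must first be shown to be well defined \emph{linear} maps on $\LL_n$ and their images identified, and a nonzero $w$ may be a combination of length-$(n-1)$ paths in which cancellations occur; controlling these cancellations is where Koszulity enters, through the realization $\LL_t=K_t^t=\bigcap_s V^{\otsGz s}\otsGz R\otsGz V^{\otsGz(t-2-s)}$, which makes the truncations honestly linear and pins down their images. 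Granting that, surjectivity of either truncation gives $w^*=0$, whence all $\lambda_q=0$ because $\caM_{n-1}$ is a basis, proving $\rho_{\caM^\perp}$ is linearly independent. The reduction in the first two paragraphs is formal; the surjectivity of $\chi_l$ or $\chi_r$ as linear maps is the heart of the matter.
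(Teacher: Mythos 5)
Your reduction in the first two paragraphs is correct, and it is in fact a cleaner route than the paper's own argument: it shows that $\sum_q\lambda_q\zeta_q=0$ holds if and only if $w=\sum_q\lambda_q q\in\LL_{n-1}$ satisfies $\LL_1 w=0$ and $w\LL_1=0$, whereas the paper argues by exhibiting, for a suitably chosen basis $\caM_n$, a ``pivot'' term $\xa_r(p)\otimes p^*$ (resp.\ $p^*\otimes \xa_l(p)$) occurring only in $\zeta_{\chi_r(p)}$ (resp.\ $\zeta_{\chi_l(p)}$). The two proofs therefore diverge only at the last step, and that is exactly where yours has a genuine gap: surjectivity of $\chi_l$ or $\chi_r$ does \emph{not} follow from $n$-homogeneity together with Koszulity of $\LL$. $n$-homogeneity only guarantees that each individual bound path of length $n-1$ extends; it does not prevent a linear combination $w$ from being killed on both sides by $\LL_1$ through cancellation. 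Concretely, take $Q$ with vertices $1,2,3$, arrows $\xa_1,\xa_2\colon 1\to 2$, $\xb_1,\xb_2\colon 2\to 3$ and relations $(\xb_1+\xb_2)\xa_1$, $(\xb_1+\xb_2)\xa_2$. This $\LL$ is Koszul and $2$-homogeneous, yet $w=\xb_1+\xb_2\in\LL_1=\LL_{n-1}$ satisfies $\LL_1w=0=w\LL_1$, and correspondingly $\zeta_{\xb_1}+\zeta_{\xb_2}=0$. (Here neither $\chi_l$ nor $\chi_r$ is even well defined, since $\xb_1\xa_1=-\xb_2\xa_1$ in $\LL_2$.)

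What rescues the statement is not $n$-homogeneity but the standing hypothesis of Section 5 that $\dtL$ is quadratic (Koszul): generation of $\dtL$ in degree one says precisely that $D\LL_{n-1}=\LL_1\cdot D\LL_n+D\LL_n\cdot\LL_1$, and then $\LL_1w=0=w\LL_1$ gives $(f\xa)(w)=f(\xa w)=0$ and $(\xa f)(w)=f(w\xa)=0$ for all $f\in D\LL_n$, $\xa\in\LL_1$, so $w$ is annihilated by all of $D\LL_{n-1}$ and hence $w=0$. (In the example above $\LL_1\cdot D\LL_2+D\LL_2\cdot\LL_1$ is only $3$-dimensional inside the $4$-dimensional $D\LL_1$, so $\dtL$ is not quadratic.) You should therefore replace the attempted appeal to surjective truncation maps by this short duality argument; with that substitution your proof closes, and it also makes visible that the linear independence really uses the quadraticity of $\dtL$ and not only the hypotheses stated at the head of Section 4.
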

\begin{proof}
We see from the proof of Lemma \ref{fqr} that for suitably chosen basis $\caM_n$ of $\LL_n$, if $p_t\in \caM_n$ then $\xa_r(p_t) \otimes p_t^*$ will only appears in $\zeta_{q_r}$ with coefficient $(-1)^n$ for $q_r=\chi_r(p_t)$, and $p_t^*\otimes \xa_l(p_t) $ will only appears in $\zeta_{q_r}$ with coefficient $1$ for $q_r=\chi_l(p_t)$.
Thus the set $\rho_{\caM^\perp} = \{\zeta_{q_r}| q_r \in e_j \caM_{n-1} e_i\}$ is a linearly independent set.
\end{proof}

Since $\rho_{\caM^\perp}$ is indexed by a basis of $\LL_{n-1} $, which have the same dimesion as $D\LL_{n-1}$, as an corollary, we get the following proposition.
\begin{pro}\label{dimeq}
$\dmn_k D\LL_{n-1} = \dmn_k k\rho_{\caM^{\perp}}$.
\end{pro}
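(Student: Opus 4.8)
The plan is to prove this by a direct dimension count, leveraging the two facts already established about the set $\rho_{\caM^\perp}$: its indexing by $\caM_{n-1}$ and its linear independence from Lemma \ref{lid}. First I would recall that $\caM_{n-1}$ is, by hypothesis, a basis of $\LL_{n-1}$, so its cardinality satisfies $|\caM_{n-1}| = \dim_k \LL_{n-1}$. Since dualizing preserves dimension over a field, this immediately gives
\[
\dim_k D\LL_{n-1} = \dim_k \LL_{n-1} = |\caM_{n-1}|.
\]

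Next I would analyze the other side of the claimed equality. By Corollary \ref{rltion} (together with Lemma \ref{fqr}), the set $\rho_{\caM^\perp} = \{\zeta_{q_r} \mid q_r \in e_j \caM_{n-1} e_i,\ i,j \in Q_0\}$ is produced by applying the assignment $q_r \mapsto \zeta_{q_r}$ to the elements of $\caM_{n-1}$. Since this assignment has $\caM_{n-1}$ as its index set, the generating set $\rho_{\caM^\perp}$ has at most $|\caM_{n-1}|$ elements. The point that upgrades this to an exact count is Lemma \ref{lid}: the family $\{\zeta_{q_r}\}$ is linearly independent, so in particular the elements $\zeta_{q_r}$ are pairwise distinct and the assignment is injective. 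Hence $\rho_{\caM^\perp}$ has exactly $|\caM_{n-1}|$ elements, and linear independence yields
\[
\dim_k k\rho_{\caM^\perp} = |\rho_{\caM^\perp}| = |\caM_{n-1}|.
\]

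Combining the two computations gives $\dim_k D\LL_{n-1} = |\caM_{n-1}| = \dim_k k\rho_{\caM^\perp}$, which is the assertion. I do not anticipate a genuine obstacle here, since the substantive work has already been carried out in Lemma \ref{fqr} (the explicit formula for $\zeta_{q_r}$) and Lemma \ref{lid} (linear independence). The only care needed is to confirm that the index map $q_r \mapsto \zeta_{q_r}$ is genuinely a bijection onto $\rho_{\caM^\perp}$ rather than a possibly collapsing surjection; this is exactly what the linear independence in Lemma \ref{lid} guarantees, so the proposition follows as an immediate corollary of the preceding results.
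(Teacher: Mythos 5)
Your proposal is correct and is essentially the paper's own argument: the paper derives the proposition directly from Lemma \ref{lid}, noting that $\rho_{\caM^\perp}$ is a linearly independent set indexed by the basis $\caM_{n-1}$ of $\LL_{n-1}$, whose dimension equals that of $D\LL_{n-1}$. Your only addition is the (correct) explicit remark that linear independence forces the indexing map $q_r \mapsto \zeta_{q_r}$ to be injective, which the paper leaves implicit.
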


\section{Trivial extension and $(n+1)$-preprojective algebras}
Now let $\LL$ be a Koszul $n$-homogeneous algebra with bound quiver $Q=(Q_0,Q_1,\rho)$ such that $\dtL$ is Koszul.
Let $\GG= \LL^{!,op}$ be its quadratic dual, then $\GG$ is defined by the bound quiver $Q^{\perp} = (Q_0,Q_1,\rho^{\perp})$.
Let $\caM_{n-1}, \caM_{n}$ be the maximal linearly independent set of bound paths of length $n-1$ and of length $n$, respectively.
Define $\nu$ as the automorphism of $\LL$ induced by the linear map $\nu: \xa \to (-1)^n\xa$ on $kQ_1$.
Let $\tQ = (Q_0, \tQ_1)$ be the returning arrow quiver of $Q$ with $\tQ_1 = Q_1 \cup Q_{\caM,1}$,  then $k\tQ \simeq T_S \tV $ for  $S=\bigoplus_{i\in Q_0} k$,  $V=kQ_1$, $V_{\caM} =k Q_{\caM,1}$ and $\tV = V \oplus V_{\caM}$. % bound quiver of $\dtnL$.

We have epimorphisms $\mu_{\nu}:  T_S \tV \to \dtnL$, and $\theta:  T_S \tV \to \Pi(\GG)$, and the  kernels of these maps are generated by their quadratic subspaces in  $\tV\otimes \tV$, respectively.
Recall that $\tV\otimes \tV = V\otimes V\oplus \caN\oplus V_{\caM}\otimes V_{\caM}$ and $\caN = V\otimes V_{\caM}\oplus V_{\caM}\otimes V$.
By identify $\tV$ with its dual spaces, paths of length $2$ form a basis  in $\tV\otimes \tV$ which is its own dual basis as in \eqref{dualdef}.
The subspaces $V\otimes V, \caN=V\otimes V_{\caM}\oplus V_{\caM}\otimes V, V_{\caM}\otimes V_{\caM}$ are pairwise orthogonal.

By Lemma \ref{relationsTE} and Corollary \ref{relationsTEtwisted},
$\Ker \mu_{\nu} =k \rho \oplus k \rho_{\caM}\oplus k \rho_{\nu,0}$, where $k \rho = \Ker \mu_{\nu} \cap V\otimes V$, $k \rho_{\caM} = V_{\caM}\otimes V_{\caM}$ and $\Ker \mu_{\nu}\cap \caN =k\rho_{\nu,0} $.
By Corollary \ref{relationsTEtwisted},  \eqqc{mapnu}{\arr{lll}{ \rho_{\nu, 0} &=& \{\sum\limits_{\xa\in Q_1, p\in \caM_n} b_{\xa,p}\xa \otimes {\beta}_p + (-1)^n \sum\limits_{\xa\in Q_1, p\in \caM_n} b'_{\xa,p}  {\beta}_{p}  \otimes {\xa} |\\&& \qquad  \sum\limits_{\xa\in Q_1, p\in \caM_n} b_{\xa,p}\xa \otimes {\beta}_p + \sum\limits_{\xa\in Q_1, p\in \caM_n} b'_{\xa,p}  {\beta}_{p}  \otimes {\xa}  \in \rho_{0}\}
}}

By Proposition \ref{preproalg}, $\Ker \theta = k\rho^{\perp} \oplus k \rho_{\caM^{\perp}}$, with $ k\rho^{\perp} = \Ker \theta \cap V\otimes V$ and $k\rho_{\caM^{\perp}} = \Ker \theta \cap \caN$.
$k\rho$ and $k \rho^{\perp}$ are orthogonal subspaces in $V\otimes V$.
Now we prove that $k\rho_{\nu,0}$ and $k\rho_{\caM^{\perp}}$ are orthogonal subspaces in $ \caN$.

We first prove the following lemma.
\begin{lem}\label{orthogonal}
$e_i\rho_{\caM^\perp} e_j$ and $\Ker \mu_{\nu}|_{e_i\caN e_j} = e_i k\rho_{\nu,0}e_j $ are orthogonal.
\end{lem}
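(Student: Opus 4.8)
The plan is to verify the orthogonality directly on the generators of the two subspaces, exploiting that the pairing \eqref{dualdef} makes the length-$2$ paths an orthonormal basis, so that $V\otimes V_{\caM}$ and $V_{\caM}\otimes V$ are mutually orthogonal summands of $\caN$. First I would fix $q\in e_j\caM_{n-1}e_i$ and take the corresponding generator of $e_i\rho_{\caM^\perp}e_j$ from Corollary \ref{rltion},
\[ \zeta_q = \sum_{\xa, p} a^{\xa, q}_p\, p^*\otimes\xa + (-1)^n\sum_{\xa, p} a^{q,\xa}_p\, \xa\otimes p^*, \]
together with an arbitrary generator of $e_i k\rho_{\nu,0}e_j$ from \eqref{mapnu},
\[ \eta = \sum_{\xa, p} b_{\xa,p}\, \xa\otimes\beta_p + (-1)^n\sum_{\xa, p} b'_{\xa,p}\, \beta_p\otimes\xa, \]
and compute $(\zeta_q,\eta)$. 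By bilinearity it suffices to treat such generator pairs. Since $V\otimes V_{\caM}\perp V_{\caM}\otimes V$, only the $V\otimes V_{\caM}$ part of $\zeta_q$ pairs against the $V\otimes V_{\caM}$ part of $\eta$, and likewise for $V_{\caM}\otimes V$; identifying $\beta_p$ with $p^*$ and using $(\xa\otimes p^*,\xa'\otimes\beta_{p'})=\delta_{\xa,\xa'}\delta_{p,p'}$ collapses each contribution to a single sum.

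This yields
\[ (\zeta_q,\eta) = (-1)^n\Big(\sum_{\xa,p} a^{q,\xa}_p b_{\xa,p} + \sum_{\xa,p} a^{\xa,q}_p b'_{\xa,p}\Big), \]
where the overall sign is exactly the cancellation of the $(-1)^n$ twist built into $\zeta_q$ by Lemma \ref{fqr} against the $(-1)^n$ twist built into $\rho_{\nu,0}$ by the automorphism $\nu$. The next step is to read the bracket as an evaluation in $D\LL_{n-1}$: using the bimodule structure of $D\LL$ and the identities $a^{\xa,q}_p=p^*(\xa q)$, $a^{q,\xa}_p=p^*(q\xa)$ from \eqref{relpqxa}, the bracket equals the value at $q$ of the functional $\mu\big(\sum_{\xa,p} b_{\xa,p}\,\xa\otimes\beta_p + \sum_{\xa,p} b'_{\xa,p}\,\beta_p\otimes\xa\big)\in D\LL_{n-1}$.

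Finally, by the description \eqref{mapnu} the untwisted element $\sum b_{\xa,p}\,\xa\otimes\beta_p+\sum b'_{\xa,p}\,\beta_p\otimes\xa$ lies in $\rho_0=\Ker\mu|_{\caN}$ (Lemma \ref{relationsTE}), so this functional is identically zero on $\LL_{n-1}$ and in particular vanishes at $q$; hence $(\zeta_q,\eta)=0$, and orthogonality of $e_i\rho_{\caM^\perp}e_j$ and $e_i k\rho_{\nu,0}e_j$ follows. The step I expect to demand the most care is the bookkeeping in the middle paragraph: getting the left and right actions of $\xa$ on $p^*$ correct so that they match $p^*(q\xa)$ rather than $p^*(\xa q)$ (and vice versa), and checking that the two $(-1)^n$ factors cancel to leave precisely the \emph{unsigned} kernel relation. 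This sign match is what forces the twist to be exactly by $\nu$, and it is the point where a careless convention would break the argument.
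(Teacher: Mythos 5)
Your proposal is correct and follows essentially the same route as the paper: pair the generator $\zeta_q$ from Lemma \ref{fqr}/Corollary \ref{rltion} against a generator of $e_ik\rho_{\nu,0}e_j$ from \eqref{mapnu}, use that $V\otimes V_{\caM}$ and $V_{\caM}\otimes V$ are orthogonal so the pairing collapses to $(-1)^n\bigl(\sum a^{q,\xa}_p b_{\xa,p}+\sum a^{\xa,q}_p b'_{\xa,p}\bigr)$, and identify the bracket with the evaluation at $q$ of $\mu$ applied to the untwisted relation in $\rho_0=\Ker\mu|_{\caN}$, which vanishes. The sign bookkeeping you flag as delicate is exactly the point the paper's proof also turns on, and your conventions match \eqref{relpqxa} and \eqref{dualdef}.
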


\begin{proof}
For each $w \in e_j\rho_{\caM^\perp} e_i$, we have $q_r\in e_j \caM_{n-1} e_i$ such that $w=\zeta_{q_r}$ has the form \eqref{fqformula}.
For $z\in \Ker \mu_{\nu}|_{e_i\caN e_j}$, we have \eqqcnn{nzcoe}{z=\sum\limits_{\xa\in Q_1 e_j,p\in \caM_n  e_i } (-1)^n b_{t, \xa} \beta_{p_t} \otimes \xa + \sum\limits_{\xa\in e_i Q_1, p\in e_j \caM_n} b_{\xa, p} \xa\otimes \beta_{p_t} } by \eqref{mapnu}.
So by \eqref{dualdef}, \eqqcn{ortho}{\arr{lll}{z (w)  &= &
(-1)^n(\sum\limits_{\xa\in e_i Q_1, p \in e_j \caM_n} b_{p, \xa} a^{\xa, q_r}_{p} +  \sum\limits_{\xa\in Q_1,p \in e_j \caM_n}  b_{\xa, p} a^{ q_r,\xa}_{p} ),
}}
since $(\xa \otimes \beta_{p}) (\beta_{p'}\otimes \xa') =0 $, $(\beta_{p}\otimes \xa)(\xa' \otimes \beta_{p'}) =0 $, $(\beta_{p}\otimes \xa)(\beta_{p'}\otimes \xa') =\delta_{\xa,\xa'}\delta_{p,p'} =(\xa \otimes \beta_{p}) (\beta_{p'}\otimes \xa') $ whenever $\xa \otimes \beta_{p}\neq 0$ and $\beta_{p}\otimes \xa \neq 0$.

On the other hand, since $z\in \Ker \mu$, we have that \eqqcn{naa}{0 = \mu_{\nu}(z)(q_r)=\sum\limits_{\xa\in e_j Q_1, p\in \caM_n e_i} b_{ \xa, p}  p^*(\xa q_r ) + \sum\limits_{\xa\in Q_1 e_{i}, p\in e_j \caM_n} b_{p, \xa}  p^*(q_r\xa ),} where $ p^*( \xa q_r) = a^{ q_r, \xa }_{p} $ and $ p^*(q_r\xa ) = a^{\xa, q_r}_{p}$.
This proves that $z(w)=0$, and thus $e_i\rho_{\caM^\perp} e_j$ and $\Ker \mu_{\nu}|_{e_i\caN e_j}$ are orthogonal.
\end{proof}

So we have that $\rho_{\nu,0}=\cup_{i,j\in Q_0} e_j\rho_{\nu,0}e_i  $ and $\rho_{\caM^{\perp}} = \cup_{i,j\in Q_0} e_j\rho_{\caM^{\perp}}e_i  $ are also orthogonal.
Note that $ D\LL_{n-1} + k\rho_{\nu, 0} = \caN $ as direct sum of $k$-spaces, and we have that $\dmn_k k\rho_{\caM^{\perp}} = \dmn_k D\LL_n$ by  Proposition \ref{dimeq}.
By comparing the dimensions, we get that  $\caN = k\rho_{\nu, 0} + k\rho_{\caM^{\perp}}$ as direct sum of $k$-spaces.
So $k\rho_{\caM^+} = \bigoplus_{i,j\in Q_0} e_i k\rho_{\caM^+} e_j $ and we have the following lemma.

\begin{lem}\label{orthogonal}
$k\rho_{\nu,0}$ and $k \rho_{\caM^{\perp}}$ are orthogonal subspaces in $\caN$.
\end{lem}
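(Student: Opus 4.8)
The plan is to reduce the global orthogonality to the block-wise orthogonality already established, exploiting the fact that the bilinear form defined by \eqref{dualdef} respects the decomposition of $\caN$ into idempotent components.

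First I would record the two block decompositions. Since $\mu_{\nu}$ is an algebra epimorphism, $\Ker\mu_{\nu}$ is a two-sided ideal and hence decomposes as $\Ker\mu_{\nu}=\bigoplus_{i,j\in Q_0}e_i(\Ker\mu_{\nu})e_j$; intersecting with the idempotent-graded subspace $\caN$ yields $k\rho_{\nu,0}=\bigoplus_{i,j\in Q_0}e_i k\rho_{\nu,0}e_j$. The matching decomposition $k\rho_{\caM^{\perp}}=\bigoplus_{i,j\in Q_0}e_i k\rho_{\caM^{\perp}}e_j$ has already been noted, each generator $\zeta_{q_r}$ of \eqref{fqformula} lying in the single block $e_i\caN e_j$ determined by $q_r\in e_j\caM_{n-1}e_i$.

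Next I would verify that the form is block-diagonal on $\caN$. The length-two paths form a basis of $\tV\otimes\tV$ that is its own dual basis, and by \eqref{dualdef} the form is diagonal in this basis; since every such basis path lies in a single component $e_i\caN e_j$, the components $e_i\caN e_j$ and $e_{i'}\caN e_{j'}$ are orthogonal whenever $(i,j)\neq(i',j')$, while the form restricts to a non-degenerate pairing within each matching block.

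Finally, given $x\in k\rho_{\nu,0}$ and $w\in k\rho_{\caM^{\perp}}$, I would write $x=\sum_{i,j}x_{ij}$ and $w=\sum_{i,j}w_{ij}$ with $x_{ij}\in e_i k\rho_{\nu,0}e_j$ and $w_{ij}\in e_i k\rho_{\caM^{\perp}}e_j$. By block-diagonality all cross terms vanish, so $x(w)=\sum_{i,j}x_{ij}(w_{ij})$, and each summand is zero by the preceding (block-wise) lemma. Hence $x(w)=0$, which is exactly the asserted orthogonality. There is no essential obstacle here: the entire computational content was discharged in Lemma \ref{fqr} and the block-wise lemma, and the only point requiring care is to match the two idempotent-indexing conventions ($e_i\cdots e_j$ versus $e_j\cdots e_i$) so that the generators $\zeta_{q_r}$ and the kernel elements $z$ are indeed paired within the same block.
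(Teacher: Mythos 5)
Your proof is correct and follows essentially the same route as the paper: the paper likewise reduces the statement to the block-wise orthogonality of $e_i\rho_{\caM^\perp}e_j$ and $e_i k\rho_{\nu,0}e_j$ established in the preceding lemma, using the idempotent decompositions of both subspaces and the fact that distinct blocks of $\caN$ are orthogonal under the pairing of \eqref{dualdef}. Your explicit justification of the block decomposition of $k\rho_{\nu,0}$ (via $\Ker\mu_{\nu}$ being an ideal) and of the block-diagonality of the form is in fact slightly more careful than the paper's terse assertion.
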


Thus we get our  main theorem of this section.

\begin{thm}\label{mainthm}
Assume that $\LL$ is a Koszul $n$-homogeneous algebra. Then $$\Pi( \LL^{ !, op }) \simeq (\dtnL)^{!,op}.$$
\end{thm}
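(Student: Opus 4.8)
The plan is to reduce the isomorphism $\Pi(\LL^{!,op}) \simeq (\dtnL)^{!,op}$ to a comparison of bound quivers, exploiting that all the algebras in sight are quadratic and generated in degrees $0$ and $1$. By Proposition~\ref{preproalg} we have a presentation $\Pi(\GG) \simeq k\tQ/(\trho^*)$ with $\trho^* = \rho^{\perp}\cup\rho_{\caM^{\perp}}$, and by Proposition~\ref{quiverTE} together with Corollary~\ref{relationsTEtwisted} (applied with $\sigma=\nu$) we have $\dtnL \simeq k\tQ/(\trho^{\nu})$ with $\trho^{\nu} = \rho\cup\rho_{\caM}\cup\rho_{\nu,0}$. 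Since both presentations live on the \emph{same} returning arrow quiver $\tQ$, and since taking the quadratic dual $(-)^{!,op}$ amounts to replacing each graded piece of quadratic relations by an orthogonal complement in $\tV\otimes\tV$ (under the self-dual basis of paths of length $2$ described after \eqref{dualdef}), the theorem will follow once I show that the relation space of $\dtnL$ and the relation space of $\Pi(\GG)$ are orthogonal complements of each other inside $\tV\otimes\tV$.

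First I would record the orthogonal decomposition $\tV\otimes\tV = (V\otimes V)\oplus \caN\oplus(V_{\caM}\otimes V_{\caM})$ into pairwise orthogonal summands, so that orthogonality can be checked summand by summand. On the $V\otimes V$ summand, the relation space of $\dtnL$ is $k\rho$ and that of $\Pi(\GG)$ is $k\rho^{\perp}$; these are orthogonal complements in $V\otimes V$ by the very definition of the quadratic dual quiver $Q^{\perp}$, so this summand contributes exactly the desired duality. On the $V_{\caM}\otimes V_{\caM}$ summand, the relation space for $\dtnL$ is all of $V_{\caM}\otimes V_{\caM}$ (this is $k\rho_{\caM}$, coming from $(D\LL)^2 = 0$), whereas $\Pi(\GG)$ has no relations there at all; hence again each is the orthogonal complement of the other. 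The genuinely new content is the middle summand $\caN = V\otimes V_{\caM}\oplus V_{\caM}\otimes V$, where $\dtnL$ contributes $k\rho_{\nu,0}$ and $\Pi(\GG)$ contributes $k\rho_{\caM^{\perp}}$; here I must show these two spaces are orthogonal complements of each other in $\caN$.

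The hard part — and the technical heart of the section — is exactly this last orthogonality on $\caN$, and it is precisely what Lemma~\ref{orthogonal} and the surrounding dimension count supply. I would first invoke Lemma~\ref{orthogonal} to get $k\rho_{\nu,0}\perp k\rho_{\caM^{\perp}}$, so the two spaces meet only in $0$ and $k\rho_{\nu,0}+k\rho_{\caM^{\perp}}$ is a direct sum inside $\caN$. To upgrade orthogonality to the statement that they are full orthogonal \emph{complements}, I would then check dimensions: by construction $k\rho_{\nu,0} = \Ker\mu_{\nu}|_{\caN}$ has complementary dimension to $D\LL_{n-1}\cong\mu_{\nu}(\caN)$ inside $\caN$, while Proposition~\ref{dimeq} gives $\dmn_k k\rho_{\caM^{\perp}} = \dmn_k D\LL_{n-1}$. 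Adding these, $\dmn_k k\rho_{\nu,0} + \dmn_k k\rho_{\caM^{\perp}} = \dmn_k\caN$, so by the orthogonality the two spaces are orthogonal complements and $\caN = k\rho_{\nu,0}\oplus k\rho_{\caM^{\perp}}$.

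Finally I would assemble the three summands: the relation space of $\dtnL$ in $\tV\otimes\tV$ is $k\rho\oplus(V_{\caM}\otimes V_{\caM})\oplus k\rho_{\nu,0}$, and its orthogonal complement is $k\rho^{\perp}\oplus 0\oplus k\rho_{\caM^{\perp}}$, which is exactly the relation space $k\trho^*$ presenting $\Pi(\GG)$. Since quadratic dual is an involution that sends the relation space to its orthogonal complement on the same quiver $\tQ$, and since both $\dtnL$ and $\Pi(\GG)$ are quadratic algebras on $\tQ$, this identification of relation spaces yields $(\dtnL)^{!,op}\simeq\Pi(\GG) = \Pi(\LL^{!,op})$, completing the proof. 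The one place requiring care is to make sure the twisting by $\nu$ (the sign $(-1)^n$) is what makes the orthogonality in Lemma~\ref{orthogonal} come out exactly, which is why the twisted trivial extension $\dtnL$, rather than the plain $\dtL$, is the correct object to dualize.
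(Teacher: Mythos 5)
Your proposal is correct and follows essentially the same route as the paper: both presentations are placed on the same returning arrow quiver $\tQ$, the space $\tV\otimes\tV$ is split into the three pairwise orthogonal summands $V\otimes V$, $\caN$, and $V_{\caM}\otimes V_{\caM}$, and on the middle summand the orthogonality of $k\rho_{\nu,0}$ and $k\rho_{\caM^{\perp}}$ (Lemma~\ref{orthogonal}) is upgraded to orthogonal complementarity by the dimension count via Proposition~\ref{dimeq}. This is exactly the argument the paper assembles in Section~5, so no further comparison is needed.
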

\begin{proof}
By Proposition \ref{quiverTE}, $\dtnL \simeq k\tQ/(\trho)$,  and  by Lemma \ref{relationsTE}, $\trho =\rho\cup \rho_{\nu,0}\cup \rho_{\caM}$. Proposition \ref{preproalg}, $\Pi( \LL^{ !, op }) \simeq k\tQ/(\trho^*)$ with $\trho^*=\rho^{\perp}\cup \rho_{\caM^{\perp}}$.
By Proposition \ref{preproalg}, $\Pi( \LL^{ !, op }) \simeq k\tQ/(\trho^{\perp})$ with $\trho^{\perp}=\rho^{\perp}\cup \rho_{\caM^{\perp}}$.%and above Lemma \ref{orthogonal}.

Since $\rho$ and $\rho^{\perp}$ are orthogonal subspaces in $V\otimes V$, $\rho_{\caM} = V_{\caM}\otimes V_{\caM}$,  we get that  $k\trho = k\rho\oplus k \rho_{\caM} \oplus k\rho_{\nu,0} $ and $k\trho^{\perp} = k\rho^{\perp}\oplus k \rho_{\caM^{\perp}} $ are orthogonal subspaces in $\tV\otimes \tV$ by Lemma \ref{orthogonal}.

This proves that $\trho^* = \trho^\perp$, and thus $\Pi( \LL^{ !, op }) \simeq k\tQ/(\trho^\perp)\simeq (\dtnL)^{!,op}$.
\end{proof}

\begin{rem}{\em

This is a known result for the path algebras of acyclic quivers.
For a stable $1$-translation algebra, a $\tau$-slice algebra is a radical squared zero algebra with acyclic quiver, say $Q$.
So its quadratic dual (dual $\tau$-slice algebra) is the path algebra of the same acyclic quiver $Q$.
It is proven in \cite{m2} that  the quadratic dual of the preprojective algebra of a path algebra of quiver $Q$ is self-injective algebra with vanishing radical cube and thus by \cite{cx09}, is a twisted trivial extension of the radical squared zero algebra with quiver $Q$.
In this case, the returning arrow quiver is exactly the double quiver of $Q$.
}\end{rem}

\section{$\tau$-slice algebras of a stable $n$-translation algebra}

Now we apply our main theorem to the $\tau$-slice algebras of stable $n$-translation algebra.

We first show that $\tau$-algebras are characterized by $n$-homogeneous quivers.
Assume that $\LL$ is an acyclic $\tau$-slice algebra with bound quiver $Q = (Q_0, Q_1,\rho)$ of an acyclic stable $n$-translation algebra $\LL$.
Assume that $Q$ is a $\tau$-slice of a stable $n$-translation quiver $\olQ$ and the algebra $\olL$  defined by $\olQ$ is an $n$-translation algebra.

\begin{lem}\label{maxpaths}
Let $Q$ be a complete $\tau$-slice of acyclic stable $n$-translation quiver $\olQ$, then $Q$ is $n$-homogeneous.
\end{lem}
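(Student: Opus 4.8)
The plan is to show that a complete $\tau$-slice $Q$ of an acyclic stable $n$-translation quiver $\olQ$ is $n$-homogeneous, which by definition means two things: (i) the relations of $Q$ can be chosen homogeneous, and (ii) every maximal bound path in $Q$ has length exactly $n$. Property (i) is essentially free: $\olQ$ is the bound quiver of a graded self-injective algebra of Loewy length $n+2$ (a stable $n$-translation algebra), so its relations $\olrho$ are homogeneous, and the induced relation set $\rho = \{x \in \olrho \mid s(x), t(x) \in Q_0\}$ inherits homogeneity. The real content is property (ii), the uniform maximal-path length.

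First I would unwind what ``maximal bound path in $Q$'' means and relate it to bound paths in $\olQ$. Since $\olL$ has Loewy length $n+2$, the nonzero graded pieces of $\olL$ live in degrees $0$ through $n+1$, so the maximal bound paths of $\olQ$ all have length $n+1$; moreover, the $n$-translation/self-injectivity structure forces a clean combinatorial picture of how these top-degree paths sit relative to the $\tau$-orbits. The key step is to pass from a maximal bound path $p$ \emph{inside} $Q$ to its behavior in $\olQ$ and use the convexity and orbit-transversality conditions (a) and (b) defining a complete $\tau$-slice. Concretely, I would argue: given a bound path $p\colon v_0 \to \cdots \to v_t$ that is maximal in $Q$, maximality means it cannot be extended within $Q$ at either end. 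The plan is to show that such a path, viewed in $\olQ$, must have length exactly $n$ by exploiting that the $n$-translation $\tau$ relates degree-$0$ projectives to degree-$(n+1)$ injectives, so a bound path of length $n+1$ in $\olQ$ connects a vertex to (a shift of) itself under $\tau$; transversality (a) then forbids both endpoints of a length-$(n+1)$ path from lying in $Q$, while convexity (b) forces any shorter path between two $Q$-vertices to lie entirely in $Q$ and to be extendable unless it already has the ``slice-maximal'' length $n$.

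The heart of the argument, and the step I expect to be the main obstacle, is pinning down precisely that a path maximal in $Q$ has length $n$ rather than something smaller or larger: upper bound and lower bound must be argued separately. For the upper bound ($\le n$), I would use that a length-$(n+1)$ bound path in $\olQ$ has endpoints in the same $\tau$-orbit (the self-injective/Nakayama structure identifies the top of a projective with the socle, and $\tau$ is the Nakayama permutation shifted), so by transversality (a) it cannot lie in $Q$; hence no bound path in $Q$ reaches length $n+1$. For the lower bound ($\ge n$), suppose a bound path $p$ in $Q$ has length $t<n$ and is maximal in $Q$; I would extend $p$ in $\olQ$ to a bound path of length $n$ or $n+1$ (possible since top-degree bound paths of $\olQ$ have length $n+1$ and $\olQ$ has no other maximal lengths), and then use convexity (b) to show the extension, or a suitable sub-extension landing back in a $Q$-vertex, must already lie in $Q$, contradicting maximality. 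Making this extension argument rigorous — ensuring the extended path stays \emph{bound} (nonzero) and that at least one extension step returns to $Q_0$ — is the delicate point, and it is exactly where the stable $n$-translation structure of $\olQ$ and the convexity of $Q$ must be combined carefully.
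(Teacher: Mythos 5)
Your plan follows essentially the same route as the paper's proof: homogeneity of $\rho$ is inherited from $\olrho$, the upper bound (length $\le n$) comes from the fact that a length-$(n+1)$ bound path in $\olQ$ joins a vertex to a member of its own $\tau$-orbit so transversality excludes it from $Q$, and the lower bound comes from extending a too-short maximal path inside $\olQ$ and pulling the extension back into $Q$ via convexity. The one step you flag as delicate is closed in the paper by looking only at the \emph{first} arrow $\beta\colon t(p)\to j$ of the extension $q$ guaranteed by the stable translation property (so $\beta p\neq 0$ automatically): either $j\in Q_0$, contradicting maximality of $p$ immediately, or $j\notin Q_0$, in which case $\tau j\in Q_0$ and completeness plus convexity of the slice again produce a nontrivial bound path in $Q$ properly extending $p$.
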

\begin{proof}
Since the relations in an $n$-translation quiver can be chosen homogeneous, so do the relations in a complete $\tau$-slice $Q$.

Let $p$ be a maximal bound path in $Q$, then its length is $< n+1$, by the definition of a complete $\tau$-slice.
If the length $l$ of $p$ is $< n$, there is a path $q$ from $t(p)$ to $\tau^{-1} s(p)$ of length $n+1-l$ in $\olQ$ such that $qp \neq 0$ in $\olQ$, since $\olQ$ is a stable translation quiver.
Especially, there is an arrow $\beta: t(p)\to j$ in $\olQ$ such that $\beta p \neq 0$.
This contradicts the maximality of $p$ if $j\in Q_0$.
If $j \not\in  Q_0$, then $\tau j \in Q_0$, and there is a path $q'$ in $\olQ$ of length $n+1-l$ from $\tau j$ to $s(p)$, such that $q'p \neq 0$ in $\olL$.
But $Q$ is a complete $\tau$-slice in $\olQ$, and thus $q'$ is a non-trivial path in $Q$ and $q'p \neq 0$ in $\LL$, contradicts the maximality of $p$ in $Q$.

This proves that the bound quiver $Q$ is $n$-homogeneous.
\end{proof}

\medskip

For an algebra $\LL$ satisfying the condition of Proposition \ref{quiverTE}, we can generalize our quiver construction $\zZ|_{n-1} Q$ in \cite{g16}.
Such quiver construction is an $n$-analog of the classical $\zZ Q$ construction. % for $n$-translation quivers.

For simplicity, we assume that  $\dtL$ is quadratic.
Take vertex set $(\zZ|_{n-1} Q)_0 =\{(i , t)| i\in Q_0, t \in \zZ\}$, arrow set $(\zZ|_{n -1}Q)_1  = \zZ \times Q_1 \cup \zZ \times \caM^{op} = \{(\alpha,t): (i,t)\longrightarrow (j,t) | \alpha:i\longrightarrow j \in Q_1, t \in \zZ\} \cup \quad\{(\beta_p , t): (j, t) \longrightarrow (i, t+1) | p\in \caM, s(p)=i,t(p)=j  \}$ and relation set $\rho_{\zZ|_{n-1} Q} =\zZ \rho\cup \zZ \rho_{\caM} \cup \zZ\rho_0$, where
$\zZ \rho =  \{\sum_{s} a_s (\xa_s,t)\otimes (\xa'_s,t) |\sum_{s} a_s \xa_s\otimes \xa'_s \in \rho, t\in \zZ\}$, $\zZ \rho_{\caM} =  \{(\beta_{p'},t+1) \otimes (\beta_p ,t)| \beta_{p'}\otimes \beta_{p}\in \rho_{\caM}, t\in \zZ\}$ and $\zZ \rho_0 = \{ \sum_{s'} a_{s'} (\beta_{p'_{s'}},t+1)\otimes  (\xa'_{s'}, t) + \sum_{s} b_s (\xa_s,t)\otimes  (\beta_{p_s} ,t)| \sum_{s'} a_{s'} \beta_{p'_{s'}}\otimes \xa'_{s'} + \sum_{s} b_s \xa_s,t\otimes \beta_{p_s} \in \rho_0 , t\in \zZ\}$.

Similar to Proposition 5.5 of \cite{g16}, we have the following realization of $\zZ|_{n-1} Q$.

\begin{pro}\label{0nap:extendible1}
Assume that $\LL$  is an algebra whose bound quiver is acyclic $n$-homogeneous and $\dtL$ is quadratic.
Then the smash product $\dtL\#  k \zZ^*$ is a self-injective algebra with bound quiver $\zZ|_{n-1} Q $, where $\dtL$ is graded by taking elements in the dual basis of $\caM$ in $D\LL_n$ as degree $1$ generators.
\end{pro}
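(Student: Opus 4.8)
The plan is to realize $\dtL \# k\zZ^*$ as the bound quiver algebra of the covering quiver attached to the $\zZ$-grading on $\dtL$ in which every arrow of $Q$ has degree $0$ and every returning arrow $\beta_p$ has degree $1$, and then to transport the self-injectivity of the trivial extension across the smash product construction. The grading is the one described in the statement, obtained by declaring the dual basis of $\caM$ in $D\LL_n$ to be the degree-$1$ generators.

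First I would record that, with this grading, $\dtL$ is a $\zZ$-graded algebra concentrated in degrees $0$ and $1$: since $(D\LL)^2 = 0$ in the trivial extension, the degree-$0$ component is $\LL$ and the degree-$1$ component is $D\LL$. The decisive point is that the relation set $\trho = \rho \cup \rho_{\caM} \cup \rho_0$ of Lemma \ref{relationsTE} is homogeneous for this grading: the elements of $\rho$ involve only arrows of $Q$ and sit in degree $0$, the elements of $\rho_{\caM}$ lie in $V_{\caM} \otimes V_{\caM}$ and sit in degree $2$, and each element of $\rho_0 \subseteq \caN = V \otimes V_{\caM} + V_{\caM} \otimes V$ pairs one arrow of $Q$ with one returning arrow and hence sits in degree $1$. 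This homogeneity is exactly what lets the smash product be presented by a bound quiver.

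Next I would invoke the standard smash product construction for a $\zZ$-graded bound quiver algebra, exactly as in the proof of Proposition 5.5 of \cite{g16}. The primitive idempotents of $\dtL \# k\zZ^*$ are indexed by $Q_0 \times \zZ$, giving the vertex set of $\zZ|_{n-1}Q$; each homogeneous generator lifts to a $\zZ$-indexed family of arrows whose endpoints are shifted in the $\zZ$-coordinate by the degree of the generator. Thus an arrow $\xa : i \to j$ of $Q$ lifts to $(\xa,t) : (i,t) \to (j,t)$ and a returning arrow $\beta_p : j \to i$ lifts to $(\beta_p,t) : (j,t) \to (i,t+1)$, reproducing $(\zZ|_{n-1}Q)_1$. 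Likewise each homogeneous relation lifts to the $\zZ$-family obtained by replacing every generator with its level-$t$ lift, yielding precisely $\zZ\rho \cup \zZ\rho_{\caM} \cup \zZ\rho_0 = \rho_{\zZ|_{n-1}Q}$. This identifies the bound quiver of $\dtL \# k\zZ^*$ with $\zZ|_{n-1}Q$.

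For self-injectivity I would use that the module category of $\dtL \# k\zZ^*$ is equivalent to the category of $\zZ$-graded $\dtL$-modules, an equivalence under which indecomposable projectives, respectively injectives, correspond to graded shifts of indecomposable projective, respectively injective, $\dtL$-modules. The trivial extension $\dtL$ is symmetric, hence self-injective, and because its symmetric form is homogeneous of degree $1$ for the chosen grading, each indecomposable graded projective is, up to shift, graded injective; transporting this across the equivalence shows every indecomposable projective $\dtL \# k\zZ^*$-module is injective, so $\dtL \# k\zZ^*$ is self-injective. The main obstacle is the bookkeeping in the third step: one must check that the lifted arrows generate the radical modulo its square and that the lifted relations generate the entire kernel of the natural surjection $k(\zZ|_{n-1}Q) \to \dtL \# k\zZ^*$, so that the stated quiver with relations genuinely presents the smash product rather than merely surjecting onto it. This is where the homogeneity verified at the outset, together with the acyclicity of $Q$, carries the argument.
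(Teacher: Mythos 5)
Your argument is correct and is essentially the one the paper intends: the paper gives no written proof here, only the remark that the statement follows as in Proposition 5.5 of \cite{g16}, and that argument is exactly your route --- grade $\dtL$ with $\LL$ in degree $0$ and $D\LL$ in degree $1$, observe that $\rho$, $\rho_0$, $\rho_{\caM}$ are homogeneous of degrees $0$, $1$, $2$, identify the smash product with the covering quiver algebra, and transport self-injectivity through the equivalence with graded $\dtL$-modules. Your explicit check of the homogeneity of the relation set and of the degree of the symmetrizing form supplies the details the paper leaves to the reference.
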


If $Q$ is acyclic, it is a complete $\tau$-slice in $\zZ|_{n-1} Q$, so $\LL$ is a $\tau$-slice algebra of $\dtL\#  k \zZ^*$.
In such case, $\LL$ is also called a {\em $\tau$-slice algebra} of $\dtL$.

Combine with Lemma \ref{maxpaths}, we have the following Proposition.

\begin{pro}\label{0nap:slicenhom}
Let $\LL$  be an algebra with acyclic bound quiver $Q$.
Then $\LL$ is a $\tau$-slice algebra if and only if $Q$ is $n$-homogeneous.
\end{pro}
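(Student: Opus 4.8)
The plan is to prove the two implications separately; each is an assembly of results already in hand, and the whole proposition is a combination of Lemma \ref{maxpaths} with the remark following Proposition \ref{0nap:extendible1}.

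For necessity ($\Rightarrow$), suppose $\LL$ is a $\tau$-slice algebra. By definition its acyclic bound quiver $Q$ is a complete $\tau$-slice of some acyclic stable $n$-translation quiver $\olQ$. Lemma \ref{maxpaths} then applies directly and gives that $Q$ is $n$-homogeneous, so nothing beyond citing that lemma is required here.

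For sufficiency ($\Leftarrow$), suppose $Q$ is acyclic $n$-homogeneous; working under the standing assumption that $\dtL$ is quadratic, I would exhibit a stable $n$-translation quiver containing $Q$ as a complete $\tau$-slice, namely the construction $\zZ|_{n-1} Q$. By Proposition \ref{0nap:extendible1} the smash product $\dtL \# k\zZ^*$ is self-injective with bound quiver $\zZ|_{n-1} Q$, which certifies $\zZ|_{n-1} Q$ as a stable $n$-translation quiver. It then remains to check that the full subquiver on $\{(i,0)\mid i\in Q_0\}$, a copy of $Q$, is a complete $\tau$-slice. Axiom (a) holds because $\tau$ shifts the integer coordinate, so the orbit of each $(i,t)$ is $\{(i,t')\mid t'\in\zZ\}$ and meets level zero only at $(i,0)$. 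Axiom (b), convexity, follows from the shape of the arrow set: the arrows $(\alpha,t)$ preserve the level while the returning arrows $(\beta_p,t)$ raise it by one and none lowers it, so a path joining two level-zero vertices can use no returning arrow and must stay inside the level-zero copy of $Q$; acyclicity of $Q$ ensures $\zZ|_{n-1} Q$ itself is acyclic. This is exactly the remark recorded after Proposition \ref{0nap:extendible1}, identifying $\LL$ as the $\tau$-slice algebra of $\dtL \# k\zZ^*$.

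The only genuine obstacle is the dependence on Proposition \ref{0nap:extendible1}, whose hypotheses require $\dtL$ to be quadratic; the cleanest course is to carry this assumption throughout, as is already done in setting up $\zZ|_{n-1} Q$, and to read off the stable $n$-translation structure from the self-injectivity of $\dtL \# k\zZ^*$. The substantive content of the argument then reduces to the two $\tau$-slice axioms, and these in turn reduce to the elementary observation that grading by the $\zZ$-coordinate is preserved by the ordinary arrows and strictly increased by the returning arrows.
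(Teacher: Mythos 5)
Your proposal is correct and follows essentially the same route as the paper, which obtains the proposition by combining Lemma \ref{maxpaths} (necessity) with the observation, following Proposition \ref{0nap:extendible1}, that an acyclic $n$-homogeneous $Q$ sits as a complete $\tau$-slice inside $\zZ|_{n-1}Q$, the bound quiver of the self-injective smash product $\dtL \# k\zZ^*$. You actually spell out more than the paper does (the verification of the two $\tau$-slice axioms and the explicit flagging of the quadraticity hypothesis on $\dtL$), which is a welcome level of care rather than a deviation.
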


As an immediate consequence, using Theorem 5.3 of \cite{g16}, we have the following result.
\begin{pro}\label{0nap:slicentrans}
Let $\LL$  be an algebra with acyclic bound quiver $Q$.
Then $\LL$ is a $\tau$-slice algebra of an $n$-translation algebra if and only if $Q$ is $n$-homogeneous and $\dtL$ is almost Koszul.
\end{pro}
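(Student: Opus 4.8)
The strategy is to layer the extra ``$n$-translation'' requirement on top of Proposition \ref{0nap:slicenhom}, which already identifies the acyclic $\tau$-slice algebras with the acyclic $n$-homogeneous ones. Since an almost Koszul algebra is in particular quadratic, whenever $\dtL$ is almost Koszul the hypotheses of Proposition \ref{0nap:extendible1} are met, so that $\dtL \# k\zZ^*$ is a self-injective algebra of Loewy length $n+2$ with bound quiver $\zZ|_{n-1} Q$; being self-injective of this Loewy length, its quiver $\zZ|_{n-1} Q$ is automatically a stable $n$-translation quiver, and $Q$ sits in it as a complete $\tau$-slice because $Q$ is acyclic. Thus the only content beyond $n$-homogeneity is the homological $(n+1,q)$-Koszul condition that upgrades this self-injective algebra to a genuine $n$-translation algebra, and the claim is that this condition is equivalent to the almost Koszulity of $\dtL$.

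For the forward implication I would take a $\tau$-slice algebra $\LL$ of an $n$-translation algebra $\olL$, so that $Q$ is a complete $\tau$-slice of the stable $n$-translation quiver $\olQ$ of $\olL$; Lemma \ref{maxpaths} then gives that $Q$ is $n$-homogeneous. Invoking the canonical realization of the $\zZ|_{n-1} Q$ construction, in which the ambient stable $n$-translation quiver of an acyclic $\tau$-slice is determined up to isomorphism, $\olL$ is identified with the smash product $\dtL \# k\zZ^*$, and the $(n+1,q)$-Koszul property built into the definition of the $n$-translation algebra $\olL$ is transported to $\dtL$ to yield almost Koszulity. For the converse, assuming $Q$ is $n$-homogeneous and $\dtL$ almost Koszul, Proposition \ref{0nap:extendible1} provides the self-injective algebra $\dtL \# k\zZ^*$ with quiver $\zZ|_{n-1} Q$, the almost Koszulity of $\dtL$ promotes it to an $n$-translation algebra, and since $Q$ is a complete $\tau$-slice of $\zZ|_{n-1} Q$ we conclude that $\LL$ is a $\tau$-slice algebra of the $n$-translation algebra $\dtL \# k\zZ^*$.

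The step doing the genuine work is the passage between the almost Koszul property of the finite-dimensional self-injective algebra $\dtL$ and the $(n+1,q)$-Koszul property of its infinite-dimensional $\zZ$-graded covering $\dtL \# k\zZ^*$; this is exactly Theorem 5.3 of \cite{g16}. I expect this equivalence to be the main obstacle, as it requires controlling how the smash product affects the vanishing of the higher self-extension groups that govern the almost Koszul condition, and checking that the grading introduced by $k\zZ^*$ matches the Loewy structure $\tilde{\LL}_0 + \cdots + \tilde{\LL}_{n+1}$ of $\dtL$. Once this equivalence is in hand, the proposition follows by combining it with Proposition \ref{0nap:slicenhom} and the definition of an $n$-translation algebra.
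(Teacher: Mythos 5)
Your proposal matches the paper's (very terse) justification: the paper derives Proposition~\ref{0nap:slicentrans} by combining Proposition~\ref{0nap:slicenhom} with Theorem~5.3 of \cite{g16}, which is exactly the reduction you make, and you correctly isolate the transfer of the $(n+1,q)$-Koszul property between $\dtL$ and its covering $\dtL\# k\zZ^*$ (via Proposition~\ref{0nap:extendible1}) as the one step that is not formal. Since that step is precisely what Theorem~5.3 of \cite{g16} supplies, your argument is essentially the same as the paper's, just written out in more detail.
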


\medskip

Now we prove the Koszulity of $\tau$-slice algebras.

\begin{pro}\label{sliceKoszul}
Acyclic $\tau$-slice algebras of stable $n$-translation algebras are Koszul.
\end{pro}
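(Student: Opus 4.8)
The plan is to verify the two defining features of a Koszul algebra: that $\LL$ is quadratic, and that the minimal graded projective resolution of the semisimple module $\LL_0$ is linear, with its $t$-th term generated in internal degree $t$. By Lemma \ref{maxpaths} the bound quiver $Q$ is $n$-homogeneous, so $\LL$ has Loewy length $n+1$ and all maximal bound paths have length $n$. By Proposition \ref{0nap:slicentrans}, the hypothesis that $\LL$ is a $\tau$-slice algebra of a stable $n$-translation algebra forces the trivial extension $\dtL$ to be almost Koszul, that is, $(n+1,q)$-Koszul in the sense of \cite{bbk02} for some $q\in\mathbb N\cup\{\infty\}$. Under the standing assumption that $\dtL$ is quadratic, Lemma \ref{relationsTE} already gives that $\LL$ is quadratic; it therefore remains only to produce linear resolutions of the simple $\LL$-modules.

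The core idea is that the (infinite, periodic) almost-Koszul resolution over the self-injective algebra $\dtL$ unrolls, along its $\zZ$-covering, into the finite linear resolutions of the simple $\LL$-modules. By Proposition \ref{0nap:extendible1} the self-injective algebra $\dtL\# k\zZ^*$ has bound quiver $\zZ|_{n-1}Q$, a $\zZ$-covering of $\dtL$ for which $Q$ is a fundamental domain: it is convex and meets each $\tau$-orbit in a single vertex. I would lift the minimal graded projective resolution of each simple $\dtL$-module to the covering $\zZ|_{n-1}Q$, where the $(n+1,q)$-periodicity coming from self-injectivity is unwound into the $\zZ$-direction, and then read off the portion supported on one fundamental domain $Q$. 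Convexity of $Q$ ensures that the bound paths of length $\le n$, and hence the differentials appearing on the fundamental domain, are exactly those of $\LL$, so this portion is precisely the minimal $\LL$-resolution of the corresponding simple.

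It then remains to match the two gradings. Within a single $\zZ$-layer the internal grading of $\dtL$ --- in which $D\LL$ fills the upper Loewy layers $\tilde{\LL}_1,\ldots,\tilde{\LL}_{n+1}$ --- agrees with the grading of $\LL$, so homological degree $t$ corresponds to internal degree $t$ for $0\le t\le n$. The single almost-Koszul jump, i.e.\ the nonlinear step in the resolution of $\dtL$, is exactly the passage from one $\zZ$-layer to the next through the socle layer $\tilde{\LL}_{n+1}=D\LL_0$; on the fundamental domain this is where the $\LL$-resolution terminates, since one $\zZ$-layer spans internal degrees $0,\ldots,n$. Consequently each term of the descended resolution is generated in the expected degree, every simple $\LL$-module has a linear resolution of length $n$ (recovering also $\gl\GG=n$ for $\GG=\LL^{!,op}$), and $\LL$ is Koszul.

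The main obstacle is the faithful comparison of resolutions across the covering and the trivial-extension boundary. One must show that restricting the almost-Koszul resolution to the convex fundamental domain $Q$ simultaneously preserves minimality, introducing no spurious generators, and reproduces the internal degrees exactly, so that the linear initial segment of the periodic resolution becomes a genuinely linear $\LL$-resolution rather than an artificial truncation. This rests on a careful bookkeeping of how the differentials of the Koszul-type bimodule complex interact with the cut determined by $Q_0$, using both convexity and the single-point intersection of $Q$ with each $\tau$-orbit; establishing this degree-faithful, minimality-preserving restriction is the technical heart of the argument.
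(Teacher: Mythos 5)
Your overall strategy is the right one, and it is essentially the strategy of the paper: produce the linear resolution of a simple $\LL$-module by restricting a linear resolution over an ambient self-injective algebra to the convex complete $\tau$-slice $Q$. However, the step you defer to the end as ``the technical heart of the argument'' --- that the restriction to the fundamental domain preserves exactness, introduces no spurious generators, and reproduces the internal degrees --- is precisely the content that a proof of this proposition must supply, and your proposal does not supply it. The resolution of the gap is short but essential: the restriction is implemented by the functor $\hhm_{\olL}(\olL e, -)$ with $e=\sum_{i\in Q_0}e_i$, which is exact because $\olL e$ is projective; it sends the term $\olL e_j(t)$ of the ambient resolution to $\LL e_j(t)$ when $j\in Q_0$ and to $0$ otherwise (this is where convexity of $Q$ enters, giving $e\olL e_j\simeq \LL e_j$); and the resulting complex terminates because the relevant syzygy is supported on vertices $\tau^{-1}j\notin Q_0$ (this is where the single-point intersection with each $\tau$-orbit enters). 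Without the exactness observation, ``reading off the portion supported on $Q$'' is not yet a resolution, and nothing in your write-up closes that loop.

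A secondary point: routing the argument through $\dtL$ and the smash product $\dtL\# k\zZ^*$ with quiver $\zZ|_{n-1}Q$ adds a layer you do not need and do not justify. You would have to transport almost-Koszulity of $\dtL$ (obtained from Proposition \ref{0nap:slicentrans}) through the covering to get linear resolutions over $\dtL\# k\zZ^*$ before restricting, which is an extra comparison of resolutions across a Galois covering. The hypothesis already hands you an ambient stable $n$-translation algebra $\olL$, which is $(n+1,q)$-Koszul by definition, so its graded simples have resolutions that are linear through homological degree $n+1$; since all paths between vertices of $Q_0$ in $\olQ$ have length at most $n$, this linear range is more than enough, and one can apply $\hhm_{\olL}(\olL e,-)$ directly. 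I would recommend reworking the argument with $\olL$ in place of $\dtL\# k\zZ^*$ and making the exactness and degree bookkeeping of the restriction functor explicit.
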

\begin{proof}
Let $\LL$ be an acyclic $\tau$-slice algebra.
Assume that $\olQ$ is the bound quiver of an acyclic stable $n$-translation algebra $\olL$ and the bound quiver $Q$ of $\LL$ is a complete $\tau$-slice of $\olQ$.
Regard $Q$ as a full sub-quiver of $\olQ$.
For each integer $t$, write $M(t)$ for the shift module of degree $t$ for $M$, that is, the graded module $M(t) = \sum\limits_{s\in \zZ} M(t)_s $ with $M(t)_s =M_{t+s}$.
Denote by $u_i(j)$ the length of paths from $i$ to $j$ in $\olQ$, for a vertex $j$ such that there is a path from $i$ to $j$, it is uniquely determined since $\olQ$ is acyclic and $\olL$ is graded.
Let $e= \sum\limits_{i\in Q_0} e_i$, we have $\LL = e\olL e$.
Since $\olL$ is $n$-translation algebra, we have $q \ge 2$ or $q=\infty$, such that for each $i\in Q_0$, we have a projective resolution
\eqqc{projresolL}{\cdots \lrw \sum\limits_{j\in \olQ_0, u_i(j) =t } (\olL e_j (t))^{a_{i,j}} \slrw{\phi_t} \cdots \lrw \olL e_i(0) \lrw \olL_0 e_i \lrw 0,} for the graded simple $\olL$-module $\olS_i \simeq \olL_0 e_i$ concentrated in degree $0$, with $\Ker \phi_q = \sum\limits_{j\in \olQ_0, u_i(j) =q } (\olL_{n+1} e_j (q))^{a_{i,j}} $ if $q$ is finite.
$\olL_{n+1} e_j (t) \simeq \olL_0 e_{\tau^{-1} j} (t+n+1)$ is semisimple.
Since $Q$ is convex in $\olL$, we have $\hhm_{\olL}(\olL e, \olL e_j (t)) = 0$ if $j\not\in Q_0$ and $\hhm_{\olL}(\olL e, \olL e_j (t)) \simeq e\olL e_j \simeq \LL e_j(t) $ if $j\in Q_0$.
Apply $\hhm_{\olL}(\olL e, -)$ on \eqref{projresolL} for the projective $\olL$-module $\olL e$, one get and exact sequence
\eqqcn{projresolLL}{\cdots \lrw \sum\limits_{j\in Q_0, u_i(j) =t } (\LL e_j (t))^{a_{i,j}} \slrw{\phi'_t} \cdots \lrw \LL e_i(0) \lrw \LL_0 e_i \lrw 0,} for the graded simple $\LL$ module $\LL_0 e_i$.
We have $\Ker \phi'_q = 0$ when $q$ is  finite since $Q$ is a complete $\tau$-slice of $\olQ$, and $e_{\tau^{-1}j} \not\in Q_0$ for any $j\in Q_0$.
This shows that each graded simple $\LL$-module has a linear resolution, and thus $\LL$ is Koszul.
\end{proof}

It follows from 2.10 of \cite{bgs96} that for a stable $n$-translation algebra, both $\tau$-slice algebras and dual $\tau$-slice algebras are Koszul algebras.

\medskip

Now we have the following version of our main for $\tau$-slice algebras.

\begin{thm}\label{cormain}
Assume that $\LL$ is a $\tau$-slice algebra of an acyclic stable $n$-translation algebra. Then $$\Pi( \LL^{ !, op }) \simeq (\dtnL)^{!,op}.$$
\end{thm}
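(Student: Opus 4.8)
The plan is to deduce Theorem~\ref{cormain} from the general Theorem~\ref{mainthm}, so that the whole task reduces to verifying that a $\tau$-slice algebra $\LL$ of an acyclic stable $n$-translation algebra meets the hypotheses under which Theorem~\ref{mainthm} was proved, namely that $\LL$ is a Koszul $n$-homogeneous algebra whose trivial extension $\dtL$ is quadratic. No new structural computation is needed; the substantive work, the orthogonality of the trivial-extension relations and the preprojective relations inside $\tV\otimes\tV$, was already carried out in Lemma~\ref{orthogonal} and Theorem~\ref{mainthm}.

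First I would record that the bound quiver $Q$ of $\LL$ is $n$-homogeneous. This is precisely Lemma~\ref{maxpaths} (equivalently one direction of Proposition~\ref{0nap:slicenhom}): a complete $\tau$-slice of an acyclic stable $n$-translation quiver has homogeneous relations and all its maximal bound paths of length $n$, so $\LL$ is an $n$-homogeneous algebra. Next I would invoke Proposition~\ref{sliceKoszul} to conclude that $\LL$ is Koszul. Together these show that $\LL$ is a Koszul $n$-homogeneous algebra, which is the literal hypothesis of Theorem~\ref{mainthm}.

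The one point requiring care is the quadraticity of $\dtL$, which enters the proof of Theorem~\ref{mainthm} through Lemma~\ref{relationsTE} and Corollary~\ref{relationsTEtwisted}. Here I would apply Proposition~\ref{0nap:slicentrans}: since $\LL$ is a $\tau$-slice algebra of an $n$-translation algebra, $\dtL$ is almost Koszul, i.e.\ $(n+1,q)$-Koszul, and such an algebra has its relations concentrated in degree $2$, hence is quadratic. Because $\nu$ is a graded automorphism of $\LL$ (it acts by $\xa\mapsto(-1)^n\xa$ on $kQ_1$), the $\nu$-twist $\dtnL$ shares the quiver of $\dtL$ and has relations obtained from those of $\dtL$ by applying $\nu$, by Corollary~\ref{relationsTEtwisted}; thus $\dtnL$ is quadratic as well. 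With every hypothesis in place, Theorem~\ref{mainthm} applies verbatim and gives $\Pi(\LL^{!,op})\simeq(\dtnL)^{!,op}$.

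I expect the only genuine obstacle to be the bookkeeping around this implicit quadraticity condition rather than any fresh input. In carrying out the plan one must take care to use \emph{stability} where it is needed, so that the translation $\tau$ is everywhere defined and Lemma~\ref{maxpaths} genuinely applies, and to keep track of acyclicity of $\olQ$, which is what licenses both the $n$-homogeneity of $Q$ and the linear-resolution argument of Proposition~\ref{sliceKoszul}.
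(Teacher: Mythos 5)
Your proposal is correct and follows essentially the same route as the paper: the paper's proof simply cites Proposition~\ref{0nap:slicentrans} for $n$-homogeneity, Proposition~\ref{sliceKoszul} for Koszulity, and then applies Theorem~\ref{mainthm}. Your additional remark that Proposition~\ref{0nap:slicentrans} also supplies the quadraticity of $\dtL$ (via almost Koszulity) makes explicit a hypothesis the paper leaves tacit, but it is the same argument.
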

\begin{proof} By Proposition \ref{0nap:slicentrans}, $\LL$ is $n$-homogeneous, and by Proposition \ref{sliceKoszul}  it is Koszul.
So by Theorem \ref{mainthm}, we have $$\Pi( \LL^{ !, op }) \simeq (\dtnL)^{!,op}.$$
\end{proof}

This is a generalization of our result obtained in \cite{gw18}, in which case $\LL$ is a special truncation of $\olQ$ and $\GG= \LL^{!,op}$ is quasi $(n-1)$-Fano algebra, using a different approach.

\medskip

Let $\GG =\LL^{!,op}$ be the quadratic dual of $\LL$.
It is known that $\Pi(\GG) = \Pi(\LL^{ !, op }) $ is AS-regular algebra when it is Koszul\cite{m2,mm11}.
Denote by $\mathrm{qgr}\Pi(\LL)$ the non-commutative projective scheme of $\Pi(\GG)$ \cite{az94}.
Then by Theorem 4.14 of \cite{mm11}, we have an equivalence of the triangulated categories $\caDb (\mathrm{qgr}\Pi(\GG)) \cong \caDb(\GG)$ when $\Pi(\GG)$ is Noetherian.
By Theorem 2.12.6 of \cite{bgs96}, we have an equivalence of the triangulated categories $\caDb(\LL)\cong \caDb(\GG)$ in this case.
By Corollary 6.11 of \cite{g12} $\caDb(\LL)\cong \udgrm \dtnL$, the stable category of graded $\dtnL$ modules.
So we get the following noncommutative version of  Bernstein-Gelfand-Gelfand correspondence \cite{bgg78}.

\begin{thm}\label{noncommBGG} Assume that $\GG$ is a dual $\tau$-slice algebra.
If $\Pi(\GG)$ is Noetherian, then we have an equivalence of the triangulated categories
$$\caDb (\mathrm{qgr}\Pi(\GG)) \cong\udgrm \dt_{\nu} \GG^{!,op}. $$
\end{thm}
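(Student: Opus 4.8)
The plan is to prove the statement as a composition of three triangulated equivalences, read from the noncommutative projective scheme on the left to the stable module category on the right, using the identification $\GG^{!,op}=\LL$ so that $\dt_{\nu}\GG^{!,op}=\dtnL$.

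First I would assemble the input needed to invoke the Minamoto--Mori machinery. By hypothesis $\GG=\LL^{!,op}$ is a dual $\tau$-slice algebra, so $\LL$ is a $\tau$-slice algebra of a stable $n$-translation algebra; by Proposition \ref{sliceKoszul} such $\LL$ is Koszul, whence $\GG$ is Koszul of global dimension $n$ and $\Pi(\GG)$ is Koszul and AS-regular by \cite{m2,mm11}. Together with the standing assumption that $\Pi(\GG)$ is Noetherian, this places us in the setting of Theorem 4.14 of \cite{mm11}, with $\GG$ the quasi $(n-1)$-Fano algebra and $\Pi(\GG)$ its $(n+1)$-preprojective algebra; that theorem then gives the first equivalence $\caDb(\mathrm{qgr}\Pi(\GG))\cong\caDb(\GG)$.

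Next I would pass from $\GG$ to its Koszul dual $\LL$. Since $\LL$ and $\GG=\LL^{!,op}$ form a Koszul dual pair of Koszul algebras, Koszul duality in the form of Theorem 2.12.6 of \cite{bgs96} supplies a triangulated equivalence $\caDb(\GG)\cong\caDb(\LL)$. Finally, Corollary 6.11 of \cite{g12} identifies $\caDb(\LL)$ with the stable category $\udgrm\dtnL$ of graded modules over the twisted trivial extension, where $\LL$ is graded as in Proposition \ref{0nap:extendible1}. Composing the three equivalences and rewriting $\dtnL=\dt_{\nu}\LL=\dt_{\nu}\GG^{!,op}$ yields the asserted equivalence; the algebraic identification $\Pi(\GG)\cong(\dtnL)^{!,op}$ of Theorem \ref{cormain} makes transparent why the two outer terms are governed by the same twisted trivial extension.

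The substantive difficulty --- and the step I expect to be the main obstacle --- lies not in any new computation but in checking that the hypotheses of the three cited results line up. Concretely, one must confirm that the AS-regular, Noetherian $\Pi(\GG)$ meets the precise quasi-Fano hypotheses of \cite{mm11}, and, most delicately, that the grading and twist conventions are compatible across the chain: the twist $\nu$ appearing in $\dtnL$ must be exactly the one under which Theorem \ref{cormain} identifies $\Pi(\LL^{!,op})$ with $(\dtnL)^{!,op}$, so that the stable-category description of \cite{g12} attaches to the same twisted trivial extension with the same grading. Once this bookkeeping is verified, the theorem follows formally from the composite equivalence.
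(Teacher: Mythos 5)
Your proposal matches the paper's own argument essentially verbatim: the same three-step composition of Theorem 4.14 of \cite{mm11} (giving $\caDb(\mathrm{qgr}\Pi(\GG))\cong\caDb(\GG)$ under the Noetherian hypothesis), Koszul duality via Theorem 2.12.6 of \cite{bgs96} (giving $\caDb(\GG)\cong\caDb(\LL)$), and Corollary 6.11 of \cite{g12} (giving $\caDb(\LL)\cong\udgrm\dtnL$). Your additional remarks on verifying the compatibility of the twist $\nu$ and the gradings are sensible bookkeeping that the paper leaves implicit, but they do not change the route.
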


In fact, we have the following picture relate a $\tau$-slice algebra, its twisted trivial extension, its quadratic dual and the higher preprojective algebra of the quadratic dual.
$$\xymatrix@C=2.0cm@R1.5cm{ \LL \ar@{<->}[rr]^-{\txt{       quadratic dual} }\ar@/^/[d]^-{\txt{ twisted\\ trivial\\ extension}}& &\GG \ar@/^/[d]^-{\txt{$(n+1)$-preprojective \\algebra}} \\ \dtnL \ar@{<->}[rr]^-{\txt{       quadratic dual}}\ar@/^/[u]^-{\txt{$\tau$-slice\\ algebra}}& &\Pi(\GG) }$$
In certain cases, $\GG$ is realized as a direct summand of the Beilinson algebra of $\Pi(\GG)$ \cite{mm11,gw18}.

%\section*{References}

{}

\end{document}